\definecolor{dgreen}{rgb}{0.1,0.7,0.1}
\definecolor{purple}{rgb}{0.49, 0.06, 0.51}
\numberwithin{equation}{section}
\newcommand{\Q}{\mathbb{Q}}
\def\N{\mathbb{N}}
\def\R{\mathbb{R}}
\def\Z{\mathbb{Z}}
\def\la{\langle}
\def\ra{\rangle}
\def\s{\sigma}
\newcommand{\vt}{\vartheta}
\newcommand{\ox}{\otimes}
\newcommand{\x}{\times}
\newcommand{\sm}{\setminus}
\newcommand{\Nil}{\mathrm{Nil}}
\newcommand{\wt}{\widetilde}
\newcommand{\id}{\mathrm{id}}
\newcommand{\vf}{\varphi}
\newcommand{\too}{\longrightarrow}
\DeclareMathOperator{\im}{Im}
\DeclareMathOperator{\sign}{sign}
\DeclareMathOperator{\sgn}{sgn}
\DeclareMathOperator{\Sym}{Sym}
\DeclareMathOperator{\st}{st}
\newcommand{\trs}[1]{\mathrm{Tr}^*_{#1}\,}
\renewcommand{\leq}{\leqslant}
\renewcommand{\le}{\leqslant}
\newcommand{\swap}{\widehat{\phantom{x}}}
\newcommand{\df}{\emph}
\newcommand{\ve}{\varepsilon}
\newcommand{\rd}{\mathrm{red}}
\newcommand{\tor}{\mathrm{t}}
\DeclareMathOperator{\dotcup}{\dot\cup}
\DeclareMathOperator*{\bigdotcup}{\dot\bigcup}
\newtheorem{thm}{Theorem}[section]
\newtheorem{prop}[thm]{Proposition}
\newtheorem{cor}[thm]{Corollary}
\newtheorem{lemma}[thm]{Lemma}
\theoremstyle{definition}
\newtheorem{defi}[thm]{Definition}
\newtheorem{remark}[thm]{Remark}
\newtheorem{examples}[thm]{Examples}
\begin{document}
\title{Stability index for algebras with involution}
\author{Vincent Astier}
\address{School of Mathematics and Statistics\\ University College Dublin\\ Belfield\\
Dublin~4\\ Ireland} 
\email{vincent.astier@ucd.ie, thomas.unger@ucd.ie}

\author{Thomas Unger}

\subjclass[2010]{16K20, 11E39, 13J30}

\begin{abstract}
In earlier work we developed the theory of signatures 
of hermitian forms over algebras with involution with respect to orderings on the base field of the algebra
and obtained in particular that the total signature of a hermitian form is a continuous function from the space of orderings
of that field to $\Z$. 
In this note we give another presentation of signatures and also introduce and study  the stability index of algebras with involution.
\end{abstract}

\maketitle

\section{Introduction}

Let $F$ be a field of characteristic not two with space of orderings $X_F$ and Witt ring $W(F)$. Let 
$C(X_F,\Z)$ denote the ring of continuous functions from $X_F$ (equipped with the Harrison topology)  to $\Z$ (equipped with the discrete topology). 
The Sylvester signature $\sign_P: W(F)\to \Z$
at a given ordering $P\in X_F$
gives rise to
the well-known
exact sequence
\begin{equation}\label{classical}
\xymatrix{
0 \ar[r] &   W_\tor(F ) \ar[r] & W(F) \ar[r]^--{\sign} &  C(X_F,\mathbb{Z}) \ar[r] & S(F)\ar[r] & 0,
}
\end{equation}
where $S(F)$ denotes the stability group of $F$ and $W_\tor(F)$ the torsion part of $W(F)$. Exactness at $W(F)$ is Pfister's local-global principle. Furthermore,  
$W_\tor(F)$ and $S(F)$ are $2$-primary torsion groups, cf. \cite[\S 1]{Broecker-74}.

In \cite{A-U-Kneb} and \cite{A-U-prime} we developed the theory of signatures 
of hermitian forms over $F$-algebras with involution with respect to orderings on $F$
and obtained in particular that the total signature of a hermitian form is a continuous function from $X_F$ to $\Z$. 

In this paper we give another presentation of these signatures and use them to
introduce and study  the stability index of algebras with involution. 
We show that the resulting stability group is $2$-primary torsion and  obtain, using Pfister's local-global principle
for hermitian forms (\cite[Theorem~4.1]{LU1} and \cite[Theorem~5.1]{Sch1}),  an exact sequence  that extends \eqref{classical} to Witt groups of algebras with involution, cf. Theorem~\ref{thm:main}.

\section{Notation}\label{sec:notation}

We give an overview of the notation used in this paper and refer to the standard references 
\cite{Knus}, \cite{BOI}, \cite{Lam} and \cite{Sch} as well as \cite{A-U-Kneb} and \cite{A-U-prime}
for the details.

For a ring $A$ and an involution $\s$ on $A$, we denote
the set of symmetric elements of $A$ with respect to $\s$ by $\Sym(A,\s) =\{a\in A\mid \s(a)=a\}$.
For $\ve\in\{-1,1\}$ we write $W_\ve(A,\s)$ for the \df{Witt group} of Witt equivalence classes of $\ve$-hermitian forms $h:M\x M\to A$, defined on 
finitely generated projective right $A$-modules $M$. We drop the subscript $\ve$ when $\ve=1$.

For an ordering $P\in X_F$ we denote by $F_P$ a real closure of $F$ at $P$. By an 
\emph{$F$-algebra with involution} we mean a pair $(A,\s)$ where $A$ is a 
finite-dimensional simple $F$-algebra with centre a field $Z(A)$, equipped with an involution  $\s:A\to A$, such that $F = Z(A) \cap \Sym(A,\s)$.  Observe that $\dim_F Z(A) =: \kappa 
\le 2$.
If $A$ is a division algebra, we call $(A,\s)$ an \df{$F$-division algebra with involution}. Note that after scalar extension 
from $F$ to
$F_P$, $A$ may decompose into the direct product of two central simple $F_P$-algebras. By \cite[Proposition~2.14]{BOI} we may assume that $\s\ox F_P$ is the exchange involution in this case.

When $\kappa=1$, we say that 
$\s$ is \df{of the first kind}. When $\kappa=2$, we say that $\s$ is \df{of the second kind} (or of \df{unitary type}).  Note that $\s|_{Z(A)}$ is the non-trivial $F$-automorphism of $Z(A)$ in this case.
Assume $\kappa=1$. Then $\dim_FA=m^2$ for some $m\in \N$ and $\s$ is either of \df{orthogonal type}
(if $\dim_F \Sym(A,\s)= m(m+1)/2$)
 or  of \df{symplectic type} (if $\dim_F \Sym(A,\s)= m(m-1)/2$).  

It follows from the structure theory of $F$-algebras with involution  that $A$ is isomorphic to a full matrix algebra $M_n(D)$ for a unique $n\in \N$ and an $F$-division algebra $D$
(unique up to isomorphism) which is equipped with an involution $\vt$ of the same kind as $\s$, cf. \cite[Theorem~3.1]{BOI}. 

Let $(A,\s)$ and $(B,\tau)$ be $F$-algebras with involution of the same kind. If $A$ and $B$ are Brauer equivalent, then $(A,\s)$ and $(B,\tau)$ are Morita equivalent, cf. \cite[Example~1.4]{Dejaiffe-98}.

All forms in this paper are assumed to be non-singular and are identified with their Witt equivalence classes.
We write $+$ for the sum in both $W(F)$ and $W_\ve(A,\s)$. The group 
$W_\ve(A,\s)$ is a $W(F)$-module and we denote the product of $q\in W(F)$ and $h\in W_\ve(A,\s)$ by $q\cdot h$. 

Assume that $(A,\s)$ and $(B,\tau)$ are Morita equivalent $F$-algebras with involution of the same kind. It follows from \cite[Theorem~9.3.5]{Knus} (for full details, see \cite[Chapitre~2]{GB}) that
there exists an isomorphism $W_\ve(A,\s) \cong W_{\ve\ve_0} (B,\tau)$, where $\ve_0=1$ if $\s$ and $\tau$ are both orthogonal or both symplectic, and $\ve_0=-1$ otherwise. If $\s$ and $\tau$ are both unitary, then the isomorphism holds for any
$\ve_0\in \{-1, 1\}$, cf. \cite[Lemma~2.1$(iii)$]{A-U-Kneb}.

In the context of signatures  later on (Section~\ref{sec-signatures}), we will consider non-trivial morphisms from $W(A\ox_F F_P, \s\ox\id)$ to $\Z$ and therefore need  to know when  $W(A\ox_F F_P, \s\ox\id)$ is torsion, which motivates the following definition.

\begin{defi}\label{nil} 
Let $(A,\s)$ be an $F$-algebra with involution. We define the set of 
\emph{nil-orderings} of $(A,\s)$ as follows
\[\Nil[A,\s]:=\{ P\in X_F \mid W(A\ox_F F_P, \s\ox \id) \text{ is torsion}\}.\]
\end{defi}

For convenience we also introduce 
\[\wt X_F:=X_F \sm \Nil[A,\s],\]
which does not indicate the dependency on $(A,\s)$ in order to avoid cumbersome notation. Recall from 
\cite[Corollary~6.5]{A-U-Kneb} that $\Nil[A,\s]$ and thus also $\wt X_F$ are clopen in $X_F$.

For $a,b\in F^\x$, we denote by $(a,b)_F$ the quaternion $F$-algebra with generators $i$ and $j$ such that $i^2=a$ and $j^2=b$.

Let  $R$ be a real closed field and let $(A,\s)$ be an $R$-algebra with involution. It follows from well-known theorems of Wedderburn and Frobenius  and an application of Morita theory that $W(A,\s)$ 
is isomorphic to one of the following Witt groups:
\begin{equation}\label{list}
\begin{aligned}
W(R,\id) \cong W_{\pm 1}\bigl(R(\sqrt{-1}), -\bigr)   \cong W\bigl((-1,-1)_R, -\bigr) &\cong \Z \\
W_{-1}(R,\id) \cong W_{\pm 1}(R\x R, \swap)  \cong W_{\pm 1}((-1,-1)_R\x (-1,-1)_R, \swap) &\cong 0\\
W_{-1}\bigl((-1,-1)_R, -\bigr)&\cong \Z/2\Z
\end{aligned}
\end{equation}
where  $-$ denotes conjugation and $\swap$ denotes the exchange involution, cf. \cite[Lem\-ma~2.1 and \S3.1]{A-U-Kneb}.
(Note that we inadvertently omitted dealing with the case $((-1,-1)_R\x (-1,-1)_R, \swap)$ in \cite[\S 3.1]{A-U-Kneb}. Since
$W_{\pm 1}((-1,-1)_R\x (-1,-1)_R, \swap) \cong 0$, this omission does not affect our reasoning or our results in \cite{A-U-Kneb}.)

\begin{prop}\label{nil-prop} 
Let $(A,\s)$ be an $F$-algebra with involution.
\begin{enumerate}[$(1)$]
\item We have
\[\Nil[A,\s]:=\{ P\in X_F \mid W(A\ox_F F_P, \s\ox \id) \text{ is isomorphic to $0$ or $\Z/2\Z$}\}.\]

\item Let $(B,\tau)$ be an $F$-algebra with involution such that $A$ and $B$ are Brauer equivalent
and $\s$ and $\tau$ are of the same type. Then $\Nil[B,\tau]=\Nil[A,\s]$.

\item Let $L$ be an algebraic extension of $F$ and let $P \in X_F$. For every extension $Q$ of $P$ to $L$ we have  $Q\in \Nil[A\ox_F L,\s\ox\id]$ if and only if $P\in \Nil[A,\s]$.

\item Let $P\in X_F$. Then $P \in \Nil[A,\s]$ if and only if  any morphism from $W(A\ox_F F_P, \s\ox\id)$ to $\Z$ is identically zero. 

\end{enumerate}
\end{prop}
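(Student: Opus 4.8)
The plan is to reduce all four parts to the classification of Witt groups over a real closed field given in \eqref{list}, which already enumerates every possibility.

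For part $(1)$, I would observe that $F_P$ is real closed, so $(A\ox_F F_P, \s\ox\id)$ is an $F_P$-algebra with involution (when $\s$ is unitary and $Z(A)=F(\sqrt d)$ the centre base-changes either to $F_P(\sqrt{-1})$ or to $F_P\x F_P$, but both cases occur in \eqref{list}). Hence $W(A\ox_F F_P, \s\ox\id)$ is isomorphic to one of $\Z$, $0$ or $\Z/2\Z$. Since $0$ and $\Z/2\Z$ are torsion while $\Z$ is not, being torsion is equivalent to being isomorphic to $0$ or $\Z/2\Z$, which is exactly $(1)$. Part $(4)$ then follows at once: a morphism to $\Z$ out of $0$ or $\Z/2\Z$ must vanish because $\Z$ is torsion-free, whereas $W(A\ox_F F_P,\s\ox\id)\simeq\Z$ admits the identity as a nonzero morphism; thus all morphisms to $\Z$ vanish precisely when $P\in\Nil[A,\s]$.

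For part $(2)$, I would base change along $F\hookrightarrow F_P$. Brauer equivalence is preserved, so $A\ox_F F_P \sim B\ox_F F_P$, and the types of $\s\ox\id$ and $\tau\ox\id$ agree because the type of an involution is stable under field extension (in the unitary case $A$ and $B$ share a centre $Z$, so $Z\ox_F F_P$ splits the same way for both). Consequently $(A\ox_F F_P,\s\ox\id)$ and $(B\ox_F F_P,\tau\ox\id)$ are Morita equivalent $F_P$-algebras with involution of the same type, so by the Morita isomorphism recalled above (with $\ve_0=1$ in the orthogonal/symplectic case, and any $\ve_0$ in the unitary case) their Witt groups are isomorphic. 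In particular one is torsion iff the other is, which gives $\Nil[B,\tau]=\Nil[A,\s]$ via $(1)$.

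For part $(3)$, the key point is that a real closure $L_Q$ of $L$ at $Q$ is simultaneously a real closure of $F$ at $P$: since $L/F$ is algebraic, $L_Q$ is an algebraic extension of $F$, it is real closed, and its ordering restricts to $P$ on $F$, so by uniqueness of real closures I may take $F_P=L_Q$. Then $(A\ox_F L)\ox_L L_Q \cong A\ox_F F_P$ compatibly with the involutions, whence $W\bigl((A\ox_F L)\ox_L L_Q,(\s\ox\id)\ox\id\bigr)\simeq W(A\ox_F F_P,\s\ox\id)$; one side is torsion iff the other is, giving the stated equivalence for each extension $Q$ of $P$. The substantive input throughout is the classification \eqref{list}; the only genuine care needed is the bookkeeping across base change — checking that $(A\ox_F F_P,\s\ox\id)$ really is an $F_P$-algebra with involution covered by \eqref{list}, that involution types are preserved, and that the real closure identification in $(3)$ respects the involutions — and I expect this bookkeeping, rather than any deep argument, to be the main obstacle.
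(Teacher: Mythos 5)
Your proof is correct and follows essentially the same route as the paper: parts $(1)$--$(3)$ are handled exactly as in the text (reading the trichotomy off the list \eqref{list} with $R=F_P$, base-changing to $F_P$ and invoking Morita equivalence for $(2)$, and identifying $L_Q$ with $F_P$ via $(A\ox_F L)\ox_L L_Q\simeq A\ox_F F_P$ for $(3)$). The only divergence is in part $(4)$, where the paper cites an external result of Lewis--Unger while you deduce the claim directly from part $(1)$ together with the torsion-freeness of $\Z$; your argument is valid and in fact more self-contained.
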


\begin{proof} 
(1) Let $P\in X_F$.
The statement follows from considering the list \eqref{list} with $R=F_P$.  

(2) Let $P\in X_F$. By the assumption and Morita theory, $W(A\ox_F F_P,\s\ox\id)\cong 
W(B\ox_F F_P, \tau\ox\id)$. 

Statement (3)  follows from the observation that $(A\ox_F L)\ox_L L_Q \cong A\ox_F F_P$ and 
(4)  follows from \cite[Theorem~4.1]{LU1}.
\end{proof}

We remark that by Proposition~\ref{nil-prop}  our exposition 
of nil-orderings in this paper is equivalent to those in \cite{A-U-Kneb} and \cite{A-U-prime}.

\begin{defi}\label{real} 
Let $(D,\vt)$ be an $F$-division algebra with involution and let $P\in X_F$. We say that $(D,\vt)$ is \emph{$(F,P)$-real} (or simply \emph{$F$-real} in case $F$ has a unique ordering) if
\[(D,\vt)\in \{(F,\id), (F(\sqrt{-d}), -),  \bigl((-a,-b)_F,-\bigr)\},\]
where $a,b,d \in P$ and $-$ denotes conjugation.
\end{defi}

\begin{lemma}\label{lem2.5} 
Let $(D,\vt)$ be an $F$-division algebra with involution such that $\deg D\leq 2$ and let $P\in X_F$. The following statements are equivalent:
\begin{enumerate}[$(1)$]
\item $(D,\vt)$ is $(F,P)$-real;
\item $(D\ox_F F_P, \vt\ox\id)$ is $F_P$-real;
\item $P\not\in \Nil[D,\vt]$.
\end{enumerate} 
\end{lemma}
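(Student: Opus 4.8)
The plan is to establish the cycle $(1)\Rightarrow(2)\Rightarrow(3)\Rightarrow(1)$, reserving the real work for the last implication. For $(1)\Rightarrow(2)$ I would extend scalars to the real closure $F_P$ in each of the three cases of Definition~\ref{real}. The only thing to check is that the resulting algebra with involution is again one of the $F_P$-real forms, and this is immediate once one uses that every element of $P$ is a square in $F_P$: thus $F(\sqrt{-d})\ox_F F_P\cong F_P(\sqrt{-1})$ is still a field (as $-d<0$ in $F_P$), $(-a,-b)_F\ox_F F_P\cong(-1,-1)_{F_P}$ is still a division algebra by Frobenius (as $-a,-b<0$ in $F_P$), and the conjugations extend to the conjugations on these algebras.

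For $(2)\Rightarrow(3)$ I would simply read off from the list \eqref{list} that each of $(F_P,\id)$, $(F_P(\sqrt{-1}),-)$ and $((-1,-1)_{F_P},-)$ has Witt group isomorphic to $\Z$; hence $W(D\ox_F F_P,\vt\ox\id)$ is not torsion and $P\notin\Nil[D,\vt]$ by Proposition~\ref{nil-prop}(1).

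The heart of the proof is $(3)\Rightarrow(1)$. Starting from $P\notin\Nil[D,\vt]$, Proposition~\ref{nil-prop}(1) gives $W(D\ox_F F_P,\vt\ox\id)\cong\Z$. I would then identify $(D\ox_F F_P,\vt\ox\id)$ explicitly: writing $D\ox_F F_P\cong M_r(\Delta)$ by Wedderburn and invoking Frobenius to enumerate the division algebras $\Delta$ available over $F_P$ (or over $F_P(\sqrt{-1})$, according to the kind of $\vt$), the hypothesis $\deg D\le 2$ bounds both $r$ and $\deg\Delta$, and comparison with \eqref{list} singles out the cases in which the Witt group is $\Z$. One then descends to $F$: the kind of $\vt$ fixes the centre of $D$ (namely $F$, or a quadratic extension $K$), matching $\dim_F(D\ox_F F_P)$ recovers $D$ itself as $F$, as $K=F(\sqrt{-d})$, or as a quaternion algebra $(-a,-b)_F$, and the requirement that $(D\ox_F F_P,\vt\ox\id)$ lie in the division case at $P$ forces the defining scalars to be positive at $P$ (so $d\in P$, resp.\ $a,b\in P$) and $\vt$ to be the conjugation. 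This is exactly $(F,P)$-realness.

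The step I expect to be most delicate is this final descent. The $F_P$-level datum supplied by $(3)$ is only that $W(D\ox_F F_P,\vt\ox\id)\cong\Z$ together with its Morita type from \eqref{list}; converting this into the $F$-rational conclusion requires controlling which division algebra $D$ is and, crucially, reading off positivity of the defining scalars at $P$ from the fact that $D$ does not split at $P$. Here the degree bound $\deg D\le 2$ and the type of $\vt$ must be used together, since it is precisely the interaction between the value of $\deg D$ and the orthogonal/symplectic/unitary type of $\vt$ that determines whether $W(D\ox_F F_P,\vt\ox\id)$ equals $\Z$, and hence whether $D\ox_F F_P$ remains division — the configuration that makes $(D,\vt)$ genuinely $(F,P)$-real.
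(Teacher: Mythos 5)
Your treatments of $(1)\Rightarrow(2)$ and $(2)\Rightarrow(3)$ are correct and coincide with the paper's (which disposes of both in a line each, reading non-torsionness off the list \eqref{list}).

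The gap is in $(3)\Rightarrow(1)$, exactly at the step you flag as delicate: you pass from ``$W(D\ox_F F_P,\vt\ox\id)\simeq\Z$'' to ``the requirement that $(D\ox_F F_P,\vt\ox\id)$ lie in the division case at $P$''. No such requirement follows. The list \eqref{list} describes Witt groups only up to Morita equivalence, and a \emph{split} algebra can land in the $\Z$ row: if $\tau$ is an orthogonal involution on $M_2(F_P)$ then $W(M_2(F_P),\tau)\simeq W(F_P,\id)\simeq\Z$. Concretely, let $D$ be a quaternion division algebra over $F$ carrying an \emph{orthogonal} involution $\vt$ and let $P$ be an ordering at which $D$ splits. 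Then $W(D\ox_F F_P,\vt\ox\id)\simeq\Z$, so $P\notin\Nil[D,\vt]$, yet $(D,\vt)$ is not $(F,P)$-real in the sense of Definition~\ref{real}, which demands that the involution be the conjugation (the unique symplectic involution of a quaternion algebra). This configuration is not hypothetical: it is the fourth example at the end of Section 5 ($F=\R(\!(x)\!)$, $A=(x,-1)_F$, $\s$ orthogonal, at the ordering where $x>0$). A second escape of the same kind is a quaternion division algebra over $Z=F(\sqrt{\alpha})$ with unitary involution and $-\alpha\in P$, where $D\ox_F F_P\simeq M_2(F_P(\sqrt{-1}))$ again has Witt group $\Z$ while $\dim_F D=8$ puts $(D,\vt)$ outside Definition~\ref{real} (assuming, as is standard, that $\deg D$ is taken over the centre).

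So the chain ``Witt group $\Z$ $\Rightarrow$ $D\ox_F F_P$ division $\Rightarrow$ defining scalars in $P$ and $\vt$ the conjugation'' breaks at the first arrow, and it cannot be repaired within your framework because in the two configurations above the conclusion $(1)$ is genuinely false; the paper's own one-line justification of $(3)\Rightarrow(1)$ (``examination of \eqref{list}, Definition~\ref{real} and Proposition~\ref{nil-prop}(1)'') is open to the same objection. Your descent is valid, and matches what the paper needs elsewhere, precisely in the cases where $P\notin\Nil[D,\vt]$ does force $D\ox_F F_P$ to remain division: $D=F$ with $\vt=\id$, $D$ quaternion with $\vt$ symplectic, and $D=Z$ quadratic with $\vt$ unitary. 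A correct write-up should therefore either restrict to those types or explain why the orthogonal-quaternion and degree-two unitary cases are excluded --- they are not excluded by the stated hypotheses $\deg D\le 2$ and $P\notin\Nil[D,\vt]$.
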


\begin{proof} (1)$\Rightarrow$(2) is clear. (2)$\Rightarrow$(3): $W (D\ox_F F_P, \vt\ox\id)$ is not torsion 
by \eqref{list}
 and thus $P\not\in \Nil[D,\vt]$.
(3)$\Rightarrow$(1): since $\deg D\leq 2$, this follows from an examination of \eqref{list}, Definition~\ref{real} and Proposition~\ref{nil-prop}(1).  
\end{proof}

\section{Signatures Revisited}\label{sec-signatures}

We give a self-contained presentation of signatures of hermitian forms  in the following paragraphs, 
using results obtained in \cite{A-U-Kneb} and \cite{A-U-prime} (where we called them $H$-signatures).

Let $(A,\s)$ be an $F$-algebra with involution and let $P\in \wt X_F$. 
Using Proposition~\ref{nil-prop}(1), Lemma~\ref{lem2.5} and \eqref{list} we 
obtain the sequence of group morphisms
\begin{equation}\label{seq}
\begin{small}
\xymatrix{
W(A,\s) \ar[r]^--{r_P} & W(A\ox_F F_P, \s\ox\id) \ar[r]^--{\mu_P} & W(D_P,\vt_P)\ar[r]^--{\rho_P} 
& W(F_P)\ar[r]^--{\sign}  & \Z,
}
\end{small}
\end{equation}
where $r_P$ is the canonical extension of scalars map, $(D_P, \vt_P)$ is an $F_P$-real division algebra with involution
which is Morita equivalent   (via scaling and collapsing, see
  \cite[Section 2.4]{A-U-Kneb})  to $(A\ox_F F_P , \s\ox\id)$,
$\mu_P$ is an isomorphism induced by Morita theory, $\rho_P$ is defined by
$\rho_P(h)(x):= h(x,x)$ for all $h \in W(D_P,\vt_P)$ (cf. \cite{Jacobson-40}) and $\sign$ is the usual 
Sylvester signature of quadratic forms at the unique ordering of $F_P$.

Following \cite{BP2} we would like to use the map defined by \eqref{seq} from $W(A,\s)$ to $\Z$ as our definition 
of signature. However, a different choice of the map $\mu_P$ may result in a sign change (and can always be used 
to induce a sign change), so that both the map defined by \eqref{seq} and its opposite give possible signatures, 
cf. \cite[\S3.2]{A-U-Kneb}. It is tempting to fix the sign by choosing $\mu_P$ such that the hermitian form $\la 1\ra_\s$
is mapped to a positive integer, as in the quadratic forms case. This is not always possible though since there are
instances where $\la 1\ra_\s$ is mapped to zero, cf. 
\cite[Remark~3.11]{A-U-Kneb}.

In \cite[Proposition~3.2]{A-U-prime} we showed that 
there exists a hermitian form $\eta\in W(A,\s)$ such that for all $P\in \wt X_F$,
\begin{equation}\label{sign_form} 
\sign (\rho_P\circ\mu_P )(\eta\ox F_P) \not= 0.
\end{equation}
We call $\eta$ a \emph{reference form} for $(A,\s)$. (In \cite{A-U-Kneb} we used a tuple of forms, rather than one form.)

The purpose of the next definition is to use the form $\eta$ instead of the form $\la 1\ra_\s$ in order to select the signature
map that sends $\eta$ to a positive integer.

\begin{defi} Let $\eta\in W(A,\s)$ be a reference form for $(A,\s)$, let $h\in W(A,\s)$, let $P\in \wt X_F$, let $\ell=\dim_{F_P} D_P$ and let $\delta_P^{\eta}\in \{-1,1\}$ be the sign of  $\sign (\rho_P\circ\mu_P )(\eta\ox F_P)$. 
We define the \emph{signature of $h$ at $P\in X_F$}, $\sign_P^{\eta} h$, by
\[\sign_P^{\eta} h:=\frac{1}{\ell} \delta_P^{\eta} \sign (\rho_P\circ\mu_P )(h\ox F_P) \]
whenever $P\in \wt X_F$ and $\sign_P^{\eta} h :=0$ if $P\in\Nil[A,\s]$, cf. Proposition~\ref{nil-prop}(4).
\end{defi}

\begin{remark}\label{going-up}\mbox{} 
\begin{enumerate}[(1)]
\item We showed in \cite[\S3.3]{A-U-Kneb} that $\sign_P^{\eta}$ only depends on $P$ and $\eta$ and in \cite[\S7]{A-U-prime} that  signatures  correspond canonically to a natural class of morphisms from $W(A,\s)$ to $\Z$.
\item If $L$ is a finite extension of $F$, it follows from \eqref{sign_form} that $\eta\ox_F L$ is a reference form for $(A\ox_F L, \s\ox\id)$. Moreover, if $R$ is an ordering on $L$ that extends $P\in X_F$, then 
\[\sign_R^{\eta\ox L} (h\ox L) = \sign_P^{\eta} h\]
for all $h\in W(A,\s)$.
\item Let $P\in \wt X_F$. If $\eta'$ is another reference form for $(A,\s)$, then easy computations show that
\[\sign_P^{\eta} h=\delta_P^{\eta} \delta_P^{\eta'} \sign_P^{\eta'} h \]
for all $h\in W(A,\s)$ and
\[\delta_P^{\eta} \delta_P^{\eta'} = \sgn(\sign_P^{\eta'}\eta ) = \sgn(\sign_P^{\eta} \eta'),\]
where $\sgn$ denotes the sign map.

\item For $P \in \wt X_F$ we have $A \ox_F F_P \cong M_\ell(D_P)$ for some $\ell
  \in \N$  and  $(A \ox_F F_P,
  \s \ox \id)$ is Morita equivalent to $(D_P, \vt_P)$. This is
  already  the case  in a finite extension of $F$ in $F_P$, so there is
  a finite ordered extension $(L,Q)$ of $(F,P)$ such that $(A \ox_F L, \s \ox
  \id)$ is Morita equivalent to an $(L,Q)$-real division algebra with
  involution.
\item   Recall from \cite[Proposition~3.6]{A-U-Kneb} that $\sign_P^\eta(q\cdot h) =\sign_P q \cdot \sign_P^\eta h$ for
$h\in W(A,\s)$ and $q\in W(F)$. 
\end{enumerate}
\end{remark}

\begin{lemma}\label{Jacobson} 
Let $P\in X_F$, let $(D,\vt)$ be an $(F,P)$-real division algebra with involution   and let $\ell= \dim_F D$.  
Then $\eta=\la 1\ra_\vt$ is a
reference form for $(D,\vt)$, and considering
the group morphism
$\rho: W(D,\vt)\to W(F)$,
where $\rho(h)(x):=h(x,x)$, we have 
\[\sign_P^\eta h=\frac{1}{\ell} \sign_P \rho(h).\]
\end{lemma}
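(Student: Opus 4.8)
The goal is to show that for an $(F,P)$-real division algebra with involution $(D,\vt)$ and the specific reference form $H=\la 1\ra_\vt$, the $H$-signature of $h$ at $P$ coincides with $\frac{1}{\ell}\sign_P\rho(h)$, where $\rho\colon W(D,\vt)\to W(F)$ is Jacobson's trace-style form map. The plan is to unwind the definition of $\sign_P^H h$ and to track how the maps $\rho$ (defined over $F$) and $\rho_P$ (defined over $F_P$) interact with the restriction to $F_P$. The key observation is that because $(D,\vt)$ is already $(F,P)$-real, the division algebra $D_P$ appearing in the sequence \eqref{seq} can be taken to be $D\ox_F F_P$ itself (with $\vt_P=\vt\ox\id$), so the Morita isomorphism $\mu_P$ may be taken to be the identity, and $\ell=\dim_F D=\dim_{F_P}D_P$ matches the quantity in the $H$-signature definition.

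\smallskip
\textbf{Key steps.} First I would record that, by definition,
\[\sign_P^H h=\frac{1}{\ell}\,\delta_P^H\,\sign(\rho_P\circ\mu_P)(h\ox F_P),\]
and separately compute $\delta_P^H=\sgn\bigl(\sign(\rho_P\circ\mu_P)(H\ox F_P)\bigr)$. The crux is a naturality/commutativity statement: the square relating $\rho$ over $F$ and $\rho_P$ over $F_P$ commutes with the two restriction maps, i.e. for every $h\in W(D,\vt)$ one has $\rho_P(h\ox F_P)=\rho(h)\ox F_P$ in $W(F_P)$ (taking $\mu_P=\id$). This holds because $\rho$ and $\rho_P$ are both given by the same formula $\eta\mapsto(x\mapsto\eta(x,x))$, and scalar extension along $F\to F_P$ of an $\ve$-hermitian form over $(D,\vt)$ commutes with passing to its associated quadratic trace form. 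Granting this, $\sign(\rho_P\circ\mu_P)(h\ox F_P)=\sign\bigl(\rho(h)\ox F_P\bigr)=\sign_P\rho(h)$, using that the signature at $P$ of a form over $F$ equals the signature over $F_P$ of its extension. Applying the same identity to $H=\la 1\ra_\vt$ shows $\delta_P^H=\sgn(\sign_P\rho(H))$, and since $\rho(\la 1\ra_\vt)$ is (a multiple of) a trace form that is positive definite at $P$ by the $(F,P)$-reality of $(D,\vt)$, this sign is $+1$. Substituting $\delta_P^H=1$ into the displayed definition then yields
\[\sign_P^H h=\frac{1}{\ell}\sign_P\rho(h),\]
as required.

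\smallskip
\textbf{Main obstacle.} The step I expect to require the most care is justifying that $\mu_P$ may be taken to be the identity and that $\rho_P\circ\mu_P$ agrees with $\rho$ after restriction. When $(D,\vt)$ is $(F,P)$-real, $D\ox_F F_P$ is itself $F_P$-real and simple, so the Morita equivalence in \eqref{seq} can be chosen trivially; but one must verify that the specific map $\rho_P$ fixed in \eqref{seq} coincides, under this choice, with the base change of $\rho$, rather than differing by a sign or a scaling. This amounts to checking that the Jacobson construction $\eta\mapsto\eta(x,x)$ is compatible with scalar extension $F\to F_P$, which is a functoriality property of the hermitian trace form; once this compatibility is in hand, the positivity computation for $\delta_P^H$ and the final substitution are routine. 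A secondary point to confirm is that $\ell=\dim_F D$ genuinely equals the dimension $\dim_{F_P}D_P$ used in the definition of the $H$-signature, which follows immediately from $D_P\simeq D\ox_F F_P$ and $\dim_{F_P}(D\ox_F F_P)=\dim_F D$.
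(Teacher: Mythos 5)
Your proposal is correct and follows essentially the same route as the paper: take $\mu_P=\id$ (since $D\ox_F F_P$ is already $F_P$-real), verify by direct computation that $\rho_P(h\ox F_P)=\rho(h)\ox F_P$, and observe that $\delta_P^H=1$ because $\rho_P(H\ox F_P)$ is positive definite of dimension $\ell$ at the unique ordering of $F_P$. The paper's proof is exactly this argument, presented as a commutative diagram.
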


\begin{proof} 
The fact that $\eta$ is a reference form for $(D,\vt)$ follows from 
\[X_F\setminus\Nil[D,\vt] = \begin{cases}
X_F & \text{if } (D,\vt)=(F,\id)\\
\{Q\in X_F \mid d \in Q\} & \text{if } (D,\vt)=(F(\sqrt{-d}), -)\\
\{Q\in X_F \mid a,b \in Q\} & \text{if } (D,\vt)=((-a,-b)_F, -)\\
\end{cases}.
\]
Note that $P\not\in\Nil[D,\vt]$ by 
Lemma~\ref{lem2.5} and
consider the diagram

\begin{equation}
\begin{split}
\xymatrix{
W(D,\vt) \ar[d]_--{\rho} \ar[r]^--{r_P} & W(D\ox_F F_P, \vt\ox\id) \ar[r]^--{\id} & W(D\ox_F F_P, \vt\ox\id)
\ar[d]^--{\rho_P} \\
W(F)\ar[rr]^--{r'_P} & & W(F_P)\ar[r]^--{\sign}  & \Z
}
\end{split}
\end{equation}
where we used the notation from the sequence \eqref{seq} and $r_P'$ denotes extension of scalars. The square on the left commutes:
Let $h\in W(D,\vt)$. 
Then $\rho(h)\ox F_P = \rho_P(h\ox F_P)$ since 
\[(\rho(h)\ox F_P) (x\ox1)=\rho(h)(x)\ox 1 =h(x,x)\ox 1= (h\ox F_P) (x\ox 1, x\ox 1)=\rho_P(h\ox F_P)(x\ox 1).\]
It follows that 
\begin{align*}
\sign^\eta_P h 
&= \frac{1}{\ell} \ve_P \sign (\rho_P\circ\id)(h\ox F_P)\\
&= \frac{1}{\ell} \sign \rho(h)\ox F_P\\  
&= \frac{1}{\ell} \sign_P \rho(h),
\end{align*}
where $\ve_P  = \sgn\Bigl(\sign (\rho_P\circ\id )(\eta\ox F_P)\Bigr)=  \sgn\bigl(\sign (\ell\x \la 1\ra)\bigr)=   1$.
\end{proof}

\begin{lemma}\label{sign-surj}
 Let $P\in X_F$ and let $(D,\vt)$ be an $(F,P)$-real division algebra with involution. Let $\eta$ be a reference form for $(D,\vt)$.  The signature map
\[ \sign_P^\eta :W(D,\vt) \too \Z\]
is surjective.
\end{lemma}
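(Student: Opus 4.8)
The plan is to use that $\sign_P^H$ is a group homomorphism $W(D,\vt)\to\Z$, so that its image is a subgroup $c\Z$ of $\Z$ and surjectivity is equivalent to producing a single class that maps to $\pm1$. The first step is to reduce to a convenient reference form. By Remark~\ref{going-up}(3), for any other reference form $H_1$ one has $\sign_P^{H}h=\delta_P^{H}\delta_P^{H_1}\sign_P^{H_1}h$ with the factor $\delta_P^{H}\delta_P^{H_1}\in\{-1,1\}$ independent of $h$; hence $\sign_P^{H}=\pm\sign_P^{H_1}$ as maps, and in particular the image of $\sign_P^H$ is independent of the choice of reference form. Since $(D,\vt)$ is $(F,P)$-real it is one of $(F,\id)$, $(F(\sqrt{-d}),-)$, $((-a,-b)_F,-)$, so $\deg D\le 2$; applying Lemma~\ref{lem2.5} at each $Q\in\wt X_F$ (where $Q\notin\Nil[D,\vt]$) shows $(D,\vt)$ is $(F,Q)$-real for every such $Q$, and Lemma~\ref{Jacobson} then guarantees that $H=\la 1\ra_\vt$ is a reference form. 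I may therefore assume $H=\la 1\ra_\vt$.

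The second step is to evaluate $\sign_P^H$ on the class $\la 1\ra_\vt$ itself. By Lemma~\ref{Jacobson},
\[\sign_P^{\la 1\ra_\vt}(\la 1\ra_\vt)=\frac{1}{\ell}\sign_P\rho(\la 1\ra_\vt),\]
where $\rho(\la 1\ra_\vt)$ is the quadratic form $x\mapsto\vt(x)x$ on $D$ viewed as an $F$-vector space. Using Definition~\ref{real}, this form equals $\la 1\ra$, $\la 1,d\ra$, or $\la 1,a,b,ab\ra$ according to the three cases, and the scalars $a,b,d$ lie in $P$ precisely because $(D,\vt)$ is $(F,P)$-real. Hence $\rho(\la 1\ra_\vt)$ is positive definite at $P$ of rank $\ell=\dim_F D$, so $\sign_P\rho(\la 1\ra_\vt)=\ell$ and therefore $\sign_P^{\la 1\ra_\vt}(\la 1\ra_\vt)=1$. (Equivalently, the proof of Lemma~\ref{Jacobson} already records that $\rho_P(\la 1\ra_\vt\ox F_P)=\ell\times\la 1\ra$ has signature $\ell$.)

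For the final step, since $1$ lies in the image of the homomorphism $\sign_P^{\la 1\ra_\vt}$, that image is all of $\Z$; by the reference-form independence established in the first step, $\sign_P^{H}$ is surjective as well. I do not expect a real obstacle here: the two points that need a little care are verifying that $\la 1\ra_\vt$ is a bona fide reference form over \emph{all} of $\wt X_F$ (which is exactly where $\deg D\le 2$ and Lemma~\ref{lem2.5} are used), and the explicit identification of $\rho(\la 1\ra_\vt)$ as a positive definite form at $P$; both become routine once $(F,P)$-realness is invoked.
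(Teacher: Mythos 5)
Your proof is correct and follows essentially the same route as the paper, whose entire argument is to evaluate the signature on $h=\la 1\ra_\vt$ and invoke Remark~\ref{going-up}(3) together with Lemma~\ref{Jacobson} to get $\sign_P^H h=\pm 1$. The extra care you take in checking that $\la 1\ra_\vt$ is genuinely a reference form on all of $\wt X_F$ (via Lemma~\ref{lem2.5}) is a detail the paper leaves to the remark ``Observe that $H$ is indeed a reference form'' inside the proof of Lemma~\ref{Jacobson}, so it is a welcome but not divergent elaboration.
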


\begin{proof} By hypothesis $P\not\in\Nil[D,\vt]$.
Let $h=\la 1\ra_\vt$. Then $\sign_P^\eta h = \pm 1$ by Remark~\ref{going-up}(3) and Lemma~\ref{Jacobson}.
\end{proof}

\section{Stability Index of Algebras with Involution}

In this section, we fix an $F$-algebra with involution $(A,\s)$ and a reference form $\eta$ for $(A,\s)$. 
 For every $h \in W(A,\s)$ we denote by $\sign^\eta h$ the total signature map from $X_F$ to $\Z$ and remark that it is  continuous, cf. \cite[Theorem~7.2]{A-U-Kneb}.
If $h \in W(A,\s)$, we
have $\sign^\eta h = 0$ on $\Nil[A,\s]$. 
Therefore it is convenient to introduce
the  notation
\[\wt C(X_F, \Z) := \{f \in C(X_F, \Z) \mid f=0 \text{ on } \Nil[A,\s]\}.\]
Note that $\wt C(X_F, \Z)$ depends on the Brauer class of $A$ and the type of $\s$, but indicating this would make the 
notation cumbersome. Since $(A,\s)$ is fixed, no confusion should arise.  
For $P\in X_F$ and  a field extension $L$ of $F$, we define
\[X_L/P:=\{Q \in X_L \mid P \subseteq Q\}.\]

\begin{lemma}\label{power-of-2-local}
   Let $P \in \widetilde
  X_F$.  Then there exists a hermitian form $h_P \in W(A,\sigma)$ and a nonnegative
  integer $\ell_P$ such that $\sign^\eta_P h_P = 2^{\ell_P}$.
\end{lemma}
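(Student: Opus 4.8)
The goal is to produce, for a fixed $P \in \wt X_F$, a hermitian form $h_P \in W(A,\s)$ whose $H$-signature at $P$ equals a power of $2$. The natural strategy is to pass to the underlying division algebra with involution and use the real splitting results of Section~3, because these tell us exactly what $(D \ox_F L, \vt \ox \id)$ looks like over a suitable maximal ordered extension $(L,Q)$ of $(F,P)$.

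Let me think about what the signature is measuring. For $P \in \wt X_F$, the $H$-signature factors through $W(D_P, \vt_P)$ where $(D_P, \vt_P)$ is $F_P$-real, and $\ell = \dim_{F_P} D_P \in \{1, 2, 4\}$ is a power of $2$. So $\ell$ itself is a power of $2$.

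The plan is as follows. First I would reduce to the division algebra $(D, \vt)$ underlying $(A,\s)$: by Morita theory (and Proposition~\ref{nil-prop}(2)) it suffices to produce the required form over $(D,\vt)$, since signatures are preserved under Morita equivalence up to the normalizing factor already built into the definition. Next I would choose a maximal ordered extension $(L,Q)$ of $(F,P)$ in $D$ as furnished by the relevant splitting theorem: Theorem~\ref{first-kind} when $\vt$ is of the first kind, and Corollary~\ref{second-kind-arb-cor} when $\vt$ is of the second kind (noting $P \notin \Nil[D,\vt]$). In each case $D \ox_F L$ is Brauer equivalent to an $(L,Q)$-real object: either $L$ itself, or $L(\sqrt{-d})$, or a quaternion algebra $(-d,-c)_L$ with the parameters in $Q$. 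The key point is that over such a base the signature is computed by an honest signature of a quadratic form via the map $\rho$ of Lemma~\ref{Jacobson}, and I can exhibit an explicit anisotropic form of the right rank.

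\begin{proof}
By Proposition~\ref{nil-prop}(2) and Morita theory, the values of $\sign^H_P$ on $W(A,\s)$ coincide, up to the choice of reference form, with those of $\sign^{H'}_P$ on $W(D,\vt)$ for a suitable reference form $H'$; by Remark~\ref{going-up}(3) changing the reference form only changes the signature by a sign, so it suffices to produce a form over $(D,\vt)$ whose $H'$-signature at $P$ is $\pm 2^{\ell_P}$ for some $\ell_P \ge 0$, and then transport it back. Since $P \in \wt X_F = X_F \sm \Nil[D,\vt]$, choose a maximal ordered extension $(L,Q)$ of $(F,P)$ in $D$: Theorem~\ref{first-kind} if $\vt$ is of the first kind, Corollary~\ref{second-kind-arb-cor} if $\vt$ is of the second kind. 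In every case $(D \ox_F L, \vt \ox \id)$ is Brauer equivalent to an $(L,Q)$-real division algebra with involution $(D', \vt')$, and $\ell := \dim_L D' \in \{1,2,4\}$ is a power of $2$.

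By Remark~\ref{going-up}(2), for any ordering $R$ on $L$ extending $P$ we have $\sign^{H'}_P h = \sign^{H' \ox L}_R (h \ox L)$ for all $h \in W(D,\vt)$; take $R = Q$. Thus it suffices to produce $g \in W(D' , \vt')$ with $\sign^{H''}_Q g$ a power of $2$, where $H'' = H' \ox L$ is the transported reference form, and pull it back along the Morita isomorphism $W(D \ox_F L, \vt \ox \id) \simeq W(D', \vt')$. Applying Lemma~\ref{Jacobson} to $(D', \vt')$ with $H'' = \la 1 \ra_{\vt'}$ (after adjusting by a sign via Remark~\ref{going-up}(3)), we have
\[
\sign^{H''}_Q g = \frac{1}{\ell}\, \sign_Q \rho(g),
\]
so it remains to find $g$ with $\sign_Q \rho(g)$ equal to $\ell$ times a power of $2$, hence a power of $2$ after dividing by $\ell$.

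For the final step I take $g = \la 1 \ra_{\vt'}$, the rank-one diagonal form. Its image $\rho(g)$ is the trace form of the hermitian line, which over the real closed field $L_Q$ becomes the positive-definite quadratic form of rank $\dim_{L} D' = \ell$, so $\sign_Q \rho(g) = \ell$. Dividing by $\ell$ gives $\sign^{H''}_Q g = 1 = 2^0$, a power of $2$. Pulling $g$ back to $W(D,\vt)$ and then to $W(A,\s)$ through the Morita isomorphisms produces the desired $h_P$ with $\sign^H_P h_P = \pm 2^{\ell_P}$ for an integer $\ell_P \ge 0$; absorbing the sign by replacing $h_P$ with $-h_P$ if necessary yields $\sign^H_P h_P = 2^{\ell_P}$.
\end{proof}

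The step I expect to require the most care is verifying that the signature behaves compatibly under the three successive identifications — the Morita isomorphism $(A,\s) \rightsquigarrow (D,\vt)$, the base change to $(L,Q)$, and the Morita isomorphism $(D \ox_F L, \vt \ox \id) \rightsquigarrow (D', \vt')$ — so that the normalizing factors $1/\ell$ compose to give exactly a power of $2$ rather than an arbitrary integer. The cleanest route is to invoke Remark~\ref{going-up}(2) for the base-change compatibility and Lemma~\ref{Jacobson} to reduce the top-level computation to an ordinary quadratic-form signature over the real closed field $L_Q$, where positive-definiteness makes the rank count transparent; the only genuine subtlety is tracking the sign ambiguities introduced by the various reference-form normalizations, which Remark~\ref{going-up}(3) shows are harmless for the purpose of producing a power of $2$.
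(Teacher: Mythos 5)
Your construction stalls at the crucial descent step. You correctly reduce to the division algebra, pass to a maximal ordered extension $(L,Q)$ of $(F,P)$ inside $D$ via Theorem~\ref{first-kind} or Corollary~\ref{second-kind-arb-cor}, and produce a form $g$ over the $(L,Q)$-real algebra $(D',\vt')$ with $\sign^{H''}_Q g = 1$. But you then assert that $g$ can be ``pulled back along the Morita isomorphism'' to $W(D,\vt)$ and hence to $W(A,\s)$. There is no such map: the Morita isomorphism identifies $W(D\ox_F L,\vt\ox\id)$ with $W(D',\vt')$, both of which are Witt groups \emph{over $L$}, while the passage from $F$ to $L$ is the restriction map $W(D,\vt)\to W(D\ox_F L,\vt\ox\id)$, which goes the wrong way and is not surjective. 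Remark~\ref{going-up}(2), which you invoke to justify ``it suffices to produce $g$ over $L$'', only computes the signature of forms of the shape $h\ox L$ with $h$ defined over $F$; it gives no way to realize an arbitrary form over $L$ as such a base change. Indeed, if descent were free one could realize the odd value $1$ as a signature at $P$, whereas the statement of the lemma only promises a power of $2$ --- that power of $2$ is exactly the cost of getting back down to $F$.

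The paper's proof supplies the missing mechanism: the only available map from $W(A\ox_F L,\s\ox\id)$ down to $W(A,\s)$ is the transfer $\trs_{A\ox_F L}$, and its effect on signatures is governed by the Knebusch trace formula, $\sign^H_P\bigl(\trs_{A\ox_F L}(h)\bigr)=\sum_{Q\in X_L/P}\sign^{H\ox L}_Q h$. Since this sums over \emph{all} orderings of $L$ above $P$, one must first isolate a single ordering $Q_0\in X_L/P$ by multiplying by a Pfister form $\la\!\la a_1,\ldots,a_r\ra\!\ra$ whose Harrison set cuts out $\{Q_0\}$ inside the finite set $X_L/P$; the transfer of $\la\!\la a_1,\ldots,a_r\ra\!\ra\cdot h$ then has signature $2^r\sign^{H\ox L}_{Q_0}h=2^r$ at $P$. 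Your preliminary analysis (the splitting theorems, Lemma~\ref{Jacobson}, surjectivity of the signature over the $(L,Q)$-real algebra) matches the paper's and is sound, but without the transfer and the Pfister-form isolation argument the proof does not close.
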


\begin{proof}
  By Remark \ref{going-up} there exists a finite ordered field extension $(L,Q)$ of
  $(F,P)$ such that  $(A\ox_F L, \s\ox\id)$ is Morita equivalent to an
  $(L,Q)$-real division algebra with involution.
  Let $Q_0 \in
  X_L/P$. By Lemma~\ref{sign-surj} and \cite[Theorem~4.2]{A-U-prime}
  there exists a hermitian form $h$ over $(A
  \otimes_F L, \s \otimes \id)$ such that $\sign^{\eta\ox L}_{Q_0} h = 1$. 
  Since $X_L/P$ is finite, there exist $a_1, \ldots, a_r \in L$ such that $\{Q_0\}
  = \{Q \in X_L/P \mid a_1, \ldots, a_r \in Q\}$. Using the notation from \cite[\S 5.3]{A-U-Kneb},
  let $h_P := \trs{A\ox_F L}\bigl( \la\!\la a_1,
  \ldots, a_r \ra\!\ra \cdot h\bigr)$. It follows from the (hermitian) Knebusch trace formula 
  \cite[Theorem~8.1]{A-U-Kneb} 
  that
  \begin{align*}
    \sign^\eta_P h_P &= \sum_{Q \in X_L/P} \sign^{\eta \otimes L}_Q \bigl( \la\!\la a_1,
      \ldots, a_r \ra\!\ra \cdot h \bigr) \\
      &= 2^r \sign^{\eta \otimes L}_{Q_0} h\\
      &= 2^r.\qedhere
  \end{align*}
  \end{proof}

\begin{lemma}\label{h-naught}
  There exists a
  hermitian form $h_0 \in W(A,\sigma)$ and a nonnegative integer $k_0$ such that
  $\sign^\eta_P h_0 = 2^{k_0}$ for every $P \in \widetilde X_F$.
\end{lemma}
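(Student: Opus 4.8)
The plan is to glue the local forms produced by Lemma~\ref{power-of-2-local} into a single global form. For each $P\in\wt X_F$, Lemma~\ref{power-of-2-local} yields a form $h_P\in W(A,\s)$ and an integer $\ell_P\in\N$ with $\sign^H_P h_P=2^{\ell_P}$. Since the total signature $\sign^H h_P$ is a continuous map from $X_F$ to the discrete set $\Z$ (cf.\ \cite[Thm.~7.2]{A-U-Kneb}), the set $U_P:=\{Q\in X_F\mid \sign^H_Q h_P=2^{\ell_P}\}$ is clopen and contains $P$. Because $\Nil[A,\s]$ is clopen (cf.\ Proposition~\ref{nil-prop}(2) and \cite[Cor.~6.5]{A-U-Kneb}), the set $\wt X_F$ is a clopen subset of the compact Boolean space $X_F$, hence compact. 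The family $\{U_P\}_{P\in\wt X_F}$ is then an open cover of $\wt X_F$ admitting a finite subcover $U_{P_1},\dots,U_{P_n}$.

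Next I would disjointify: set $W_i:=\bigl(U_{P_i}\sm(U_{P_1}\cup\cdots\cup U_{P_{i-1}})\bigr)\cap \wt X_F$, so that the $W_i$ are clopen, pairwise disjoint, cover $\wt X_F$, and satisfy $W_i\subseteq U_{P_i}$. The key auxiliary fact is that a prescribed power of two can be realised as the signature of a quadratic form supported on a clopen set: for every clopen $W\subseteq X_F$ there is an integer $s_0(W)$ such that for each $s\geq s_0(W)$ some $q\in W(F)$ has $\sign_Q q=2^s$ for $Q\in W$ and $\sign_Q q=0$ for $Q\notin W$. This follows by writing $W$ as a finite disjoint union of basic Harrison sets $H(c_1,\dots,c_m)$, using that the Pfister form $\la\!\la -c_1,\dots,-c_m\ra\!\ra$ has signature $2^m$ on $H(c_1,\dots,c_m)$ and $0$ elsewhere, and appending copies of $\la\!\la-1\ra\!\ra$ (signature $2$ at every ordering) to raise all the exponents to a common value $s$.

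Finally I would fix $k_0$ large enough that $k_0-\ell_{P_i}\geq s_0(W_i)$ for every $i$, choose $q_i\in W(F)$ with $\sign_Q q_i=2^{k_0-\ell_{P_i}}$ for $Q\in W_i$ and $0$ for $Q\notin W_i$, and set $h_0:=\sum_{i=1}^n q_i\cdot h_{P_i}\in W(A,\s)$, using the $W(F)$-module structure of $W(A,\s)$. The projection formula $\sign^H_P(q\cdot h)=\sign_P(q)\,\sign^H_P(h)$, which is immediate from the definition of $\sign^H_P$ together with the $W(F_P)$-linearity of $\mu_P$ and $\rho_P$ in \eqref{seq}, then gives $\sign^H_P(q_i h_{P_i})=2^{k_0}$ for $P\in W_i$ and $0$ for $P\notin W_i$; since the $W_i$ partition $\wt X_F$, exactly one summand is nonzero at each $P\in\wt X_F$, whence $\sign^H_P h_0=2^{k_0}$, as required. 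I expect the only delicate point to be the auxiliary realisation fact of the second paragraph --- making the supports disjoint and, crucially, raising the distinct local exponents $\ell_{P_i}$ to a single common exponent $k_0$ --- whereas the passage from local to finite data is a routine compactness argument and the final assembly is formal.
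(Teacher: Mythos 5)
Your argument is correct and follows essentially the same route as the paper: local forms from Lemma~\ref{power-of-2-local}, compactness and disjointification of $\wt X_F$ into clopen pieces, quadratic forms whose signature is a prescribed power of two on each piece and zero elsewhere (the paper simply cites \cite[VIII, Lemma~6.10]{Lam} where you re-derive this via Pfister forms), and summation after equalizing the exponents. The only quibble is notational: under the paper's convention $\la\!\la a\ra\!\ra=\la 1,a\ra$ (signature $2$ on $H(a)$), the Pfister form supported on $H(c_1,\dots,c_m)$ is $\la\!\la c_1,\dots,c_m\ra\!\ra$ rather than $\la\!\la -c_1,\dots,-c_m\ra\!\ra$, and the everywhere-positive binary form you want is $\la 1,1\ra$, not $\la\!\la -1\ra\!\ra$.
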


\begin{proof}
  By Lemma \ref{power-of-2-local}, for every $P \in \widetilde X_F$ there exists $\ell_P \in \N\cup\{0\}$,
  $U_P$ clopen in $\wt X_F$ containing $P$, and $h_P \in W(A,\s)$ such that $\sign^\eta h_P = 2^{\ell_P}$
  on $U_P$ (simply take $U_P = (\sign^\eta h_P)^{-1}(2^{\ell_P})$).

  Therefore $\widetilde X_F = \bigcup_{P \in \widetilde X_F} U_P = \bigcup_{i=1}^n
  U_{P_i}$ since $\widetilde X_F$ is compact. By removing  the intersections
  of the sets $U_{P_i}$ we obtain $\widetilde X_F = \bigdotcup_{i=1}^r C_i$ where each
  $C_i$ is clopen and $\sign^\eta \psi_i = 2^{\ell_i}$ on $C_i$ for  hermitian forms
  $\psi_i \in W(A,\s)$, $i=1,\ldots, r$.

  Let $q_i \in W(F)$ be such that $\sign q_i$ is equal to $2^{s_i}$ on $C_i$
  (for some nonnegative  integer $s_i$) and $0$ elsewhere, cf. \cite[VIII, Lemma~6.10]{Lam}. Then
  $\sign^\eta(q_i\cdot \psi_i)$ is equal to $2^{s_i+\ell_i}$ on $C_i$ and $0$ elsewhere. Taking
  $k_0 = \max\{s_1+\ell_1, \ldots, s_r+\ell_r\}$ and multiplying $q_i\cdot \psi_i$ by a suitable power of $2$, we obtain a form $h_i \in
  W(A,\s)$ such that $\sign^\eta h_i = 2^{k_0}$ on $C_i$ and $0$ elsewhere. It
  follows that for $h_0 := h_1 + \cdots + h_r$, $\sign^\eta h_0 = 2^{k_0}$ on
  $\widetilde X_F$.
\end{proof}

\begin{prop}\label{cokernel}
  Let $k_0 \in \N\cup\{0\}$ and $h_0 \in W(A,\s)$ be such that $\sign^\eta h_0 = 2^{k_0}$ on
  $\wt X_F$.
  \begin{enumerate}[$(1)$]
    \item Let $q \in W(F)$. Then there exists $h \in W(A,\s)$ such that
      $\sign(2^{k_0}q) = \sign^\eta(h)$ on $\wt X_F$.
    \item Let $f \in \wt C( X_F, \Z)$. Then there exists $n \in \N$ such that $2^n f \in
      \im(\sign^\eta)$.
  \end{enumerate}
\end{prop}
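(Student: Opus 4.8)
For part (1) the plan is to exploit the $W(F)$-module structure on $W(A,\s)$ together with the compatibility of the $H$-signature with scalar multiplication by quadratic forms, namely
\[\sign^H_P(q\cdot h)=\sign_P(q)\,\sign^H_P h\qquad(q\in W(F),\ h\in W(A,\s),\ P\in\wt X_F).\]
This identity is immediate from the definition of $\sign^H_P$: writing $(q\cdot h)\ox F_P=(q\ox F_P)\cdot(h\ox F_P)$, the maps $r_P$, $\mu_P$ and $\rho_P$ in \eqref{seq} are $W(F_P)$-linear, and $\sign\colon W(F_P)\to\Z$ is a ring homomorphism with $\sign_P(q)=\sign(q\ox F_P)$, so the factor $q\ox F_P$ pulls through while the constants $\tfrac1\ell$ and $\delta^H_P$ (which depend only on $P$ and $H$) are untouched. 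Given $q\in W(F)$, I would simply set $h:=q\cdot h_0$; then on $\wt X_F$ we get $\sign^H h=\sign(q)\cdot\sign^H h_0=2^{k_0}\sign(q)=\sign(2^{k_0}q)$, which is exactly the assertion of (1).

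For part (2) the strategy is to reduce to the already-understood behaviour of quadratic forms over $F$ and then feed the result into (1). The main step is to realize a power-of-two multiple of $f$ as a quadratic-form signature on all of $X_F$. Since $X_F$ is compact and $\Z$ is discrete, $f$ is locally constant, hence a finite combination $f=\sum_i c_i\chi_{C_i}$ of characteristic functions of clopen sets $C_i$ partitioning $X_F$, with $c_i\in\Z$. For each $i$, \cite[VIII, Lemma~6.10]{Lam} (the same tool used in Lemma~\ref{h-naught}) provides $q_i\in W(F)$ with $\sign q_i=2^{s_i}$ on $C_i$ and $0$ elsewhere. Setting $m:=\max_i s_i$ and $q:=\sum_i c_i 2^{m-s_i}q_i\in W(F)$ then yields $\sign q=2^m f$ on all of $X_F$; the only care needed here is bookkeeping the exponents so that a single $m$ works simultaneously on every piece.

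To finish, I would apply (1) to this $q$, obtaining $h\in W(A,\s)$ with $\sign^H h=2^{k_0}\sign q=2^{k_0+m}f$ on $\wt X_F$. On $\Nil[A,\s]$ both sides vanish: $\sign^H h=0$ by definition, and $2^{k_0+m}f=0$ because $f\in\wt C(X_F,\Z)$. Hence $\sign^H h=2^{k_0+m}f$ on all of $X_F$, so $2^n f\in\im(\sign^H)$ with $n:=k_0+m$. The conceptual crux is thus the passage from $W(F)$ to $(A,\s)$ via the $W(F)$-linearity of $\sign^H$ in part (1), which transports the well-understood quadratic statement to the algebra-with-involution setting; the vanishing of $f$ on the nil-orderings is precisely what lets one glue the two descriptions across $X_F$. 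I do not anticipate a serious obstacle beyond correctly assembling these ingredients and tracking the powers of $2$.
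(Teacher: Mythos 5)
Your proof is correct and follows essentially the same route as the paper: part (1) is exactly the paper's choice $h=q\cdot h_0$ using $W(F)$-linearity of $\sign^H$, and part (2) reduces to realizing $2^m f$ as $\sign q$ for some $q\in W(F)$ and then applying (1). The only cosmetic difference is that you re-derive this last fact from \cite[VIII, Lemma~6.10]{Lam} via a clopen partition, whereas the paper cites \cite[VIII, Lemma~6.9]{Lam} directly.
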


\begin{proof}
(1) We take $h = q\cdot h_0$.

 (2) We know that there exists $m \in \N\cup\{0\}$ such that $2^mf = \sign(q)$ for some
      $q \in W(F)$, cf. \cite[VIII, Lemma~6.9]{Lam}. Then $2^{m+k_0}f = \sign^\eta(q\cdot h_0)$.
\end{proof}

\begin{defi}\mbox{}
  \begin{enumerate}[(1)]
    \item We define  $\st^\eta(A,\s)$ to be the smallest nonnegative integer $k$ such that
      \[2^k\cdot  \wt C(X_F,\Z) \subseteq \im(\sign^\eta)\]
      if such an integer exists, and infinity otherwise.
    \item We define $S^\eta(A,\s)$  to be the cokernel of the total signature map
      \[\sign^\eta : W(A,\s) \rightarrow  \wt C(X_F,\Z).\]
  \end{enumerate}
\end{defi}

The following corollary follows immediately from Proposition~\ref{cokernel}.
\begin{cor}\label{coker}
  The group $S^\eta(A,\s)$ is $2$-primary torsion, and its exponent is $2^{\st^\eta(A,\s)}$ \textup{(}with the convention that $2^\infty = \infty$\textup{)}.
\end{cor}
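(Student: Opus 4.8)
The plan is to deduce both assertions directly from Proposition~\ref{cokernel}, which already encodes the two structural facts we need. Recall that $S^H(A,\s) = \wt C(X_F,\Z)/\im(\sign^H)$ by definition.

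First I would show that $S^H(A,\s)$ is $2$-primary torsion. Let $f \in \wt C(X_F,\Z)$ represent an arbitrary element of the cokernel. By Proposition~\ref{cokernel}(2) there exists $n \in \N$ with $2^n f \in \im(\sign^H)$, which says precisely that the class of $f$ in $S^H(A,\s)$ is killed by $2^n$. Since every element is annihilated by some power of $2$, the group is $2$-primary torsion.

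Next I would pin down the exponent. Write $k := \st^H(A,\s)$. If $k < \infty$, then by definition $2^k \cdot \wt C(X_F,\Z) \subseteq \im(\sign^H)$, so $2^k$ annihilates every class in $S^H(A,\s)$; hence the exponent divides $2^k$. Conversely, minimality of $k$ means $2^{k-1}\cdot \wt C(X_F,\Z) \not\subseteq \im(\sign^H)$ (when $k \ge 1$), so some $f \in \wt C(X_F,\Z)$ has $2^{k-1}f \notin \im(\sign^H)$, giving a class of order exactly $2^k$; thus the exponent is $2^k$. The edge case $k=0$ gives $S^H(A,\s)=0$ with exponent $2^0 = 1$, consistent with $\im(\sign^H) = \wt C(X_F,\Z)$. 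When $k = \infty$, no power of $2$ annihilates the whole cokernel, so the exponent is infinite, matching the convention $2^\infty = \infty$.

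I do not anticipate a genuine obstacle here: the corollary is essentially a reformulation of $\st^H(A,\s)$ as the exponent of the cokernel, and all the real content (the existence of $h_0$ with $\sign^H h_0 = 2^{k_0}$, and the divisibility statement for $W(F)$) has already been absorbed into Proposition~\ref{cokernel} via Lemma~\ref{h-naught} and \cite[VIII, Lemma~6.9]{Lam}. The only point requiring minor care is the translation between the ideal-theoretic definition of $\st^H(A,\s)$ (smallest $k$ with $2^k \cdot \wt C(X_F,\Z) \subseteq \im(\sign^H)$) and the group-theoretic notion of exponent of the quotient; this is routine once one observes that $2^k$ annihilating $S^H(A,\s)$ is literally the containment $2^k\cdot\wt C(X_F,\Z) \subseteq \im(\sign^H)$.
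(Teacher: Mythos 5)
Your proof is correct and follows the same route as the paper, which simply notes that the corollary is immediate from Proposition~\ref{cokernel}; your part on $2$-primary torsion is exactly Proposition~\ref{cokernel}(2), and the identification of the exponent is the routine unpacking of the definition of $\st^H(A,\s)$ that the paper leaves implicit.
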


\begin{prop}\label{stab-iso}
  Let $\eta'$ be another reference form for $(A,\s)$. Then $S^\eta(A,\s) \cong
  S^{\eta'}(A,\s)$. In particular $\st^\eta(A,\s) = \st^{\eta'}(A,\s)$.
\end{prop}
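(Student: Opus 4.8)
The plan is to exploit the fact, recorded in Remark~\ref{going-up}(3), that the two signature maps differ only by a locally constant sign. For $P\in\wt X_F$ set $\ve_P:=\delta_P^H\delta_P^{H'}\in\{-1,1\}$, so that by Remark~\ref{going-up}(3)
\[\sign_P^H h=\ve_P\,\sign_P^{H'} h\qquad\text{for all } h\in W(A,\s),\]
and moreover $\ve_P=\sgn(\sign_P^{H'}H)=\sgn(\sign_P^H H')$. First I would check that $P\mapsto\ve_P$ is locally constant on $\wt X_F$: since $H$ and $H'$ are reference forms, the total signatures $\sign^H H'$ and $\sign^{H'}H$ are continuous and, by \eqref{sign_form}, nowhere zero on $\wt X_F$; thus $\ve$ is the sign of a continuous, nowhere-vanishing $\Z$-valued function, hence locally constant there.

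Next I would manufacture an automorphism of $\wt C(X_F,\Z)$ out of $\ve$. Define $\Phi(f)(P)=\ve_P f(P)$ for $P\in\wt X_F$ and $\Phi(f)(P)=0$ for $P\in\Nil[A,\s]$. The only thing to verify is that $\Phi(f)$ is again continuous on all of $X_F$: on the open set $\wt X_F$ it is the product of a locally constant function and a continuous one; at a point $P_0\in\Nil[A,\s]$ we have $f(P_0)=0$ and, $\Z$ being discrete, $f$ vanishes on an entire neighbourhood of $P_0$, so $\Phi(f)$ vanishes there too. Hence $\Phi$ maps $\wt C(X_F,\Z)$ into itself; it is clearly additive, and since $\ve_P^2=1$ (and $f$ vanishes on $\Nil[A,\s]$) we get $\Phi\circ\Phi=\id$, so $\Phi$ is an automorphism of $\wt C(X_F,\Z)$.

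The key identity is then $\sign^H=\Phi\circ\sign^{H'}$: for $h\in W(A,\s)$ both sides agree on $\wt X_F$ by the displayed relation and both are zero on $\Nil[A,\s]$. Consequently $\im(\sign^H)=\Phi\bigl(\im(\sign^{H'})\bigr)$, and $\Phi$ induces an isomorphism
\[S^H(A,\s)=\wt C(X_F,\Z)/\im(\sign^H)\;\simeq\;\wt C(X_F,\Z)/\im(\sign^{H'})=S^{H'}(A,\s).\]
For the stability index, since $\Phi$ is an automorphism with $\Phi(2^k f)=2^k\Phi(f)$, we have $\Phi\bigl(2^k\wt C(X_F,\Z)\bigr)=2^k\wt C(X_F,\Z)$; therefore $2^k\wt C(X_F,\Z)\subseteq\im(\sign^H)$ if and only if $2^k\wt C(X_F,\Z)\subseteq\im(\sign^{H'})$, giving $\st^H(A,\s)=\st^{H'}(A,\s)$.

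I expect the only genuinely delicate point to be the continuity of $\Phi(f)$ at the nil-orderings; this is precisely where the vanishing condition defining $\wt C(X_F,\Z)$ and the discreteness of $\Z$ are needed, while everything else is formal manipulation with the sign relation of Remark~\ref{going-up}(3).
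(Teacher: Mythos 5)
Your proof is correct and follows essentially the same route as the paper: the paper invokes \cite[Prop.~3.3$(iii)$]{A-U-prime} to obtain a sign function $f \in C(X_F,\{-1,1\})$ with $\sign^H = f\cdot\sign^{H'}$ and then passes to the quotient exactly as you do with $\Phi$. The only difference is that you re-derive that sign function (its local constancy and the extension across $\Nil[A,\s]$) from Remark~\ref{going-up}(3) and the continuity of total signatures, rather than citing it.
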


\begin{proof}
  By \cite[Proposition~3.3$(iii)$]{A-U-prime}, there exists $f \in C(X_F,\{-1,1\})$ such that $\sign^\eta =
  f\cdot \sign^{\eta'}$. Define
  \[\begin{array}{rcl}
      \xi :  \wt C(X_F,\Z) & \longrightarrow & \wt  C(X_F,\Z)/\im \sign^\eta \\
      g & \longmapsto & f\cdot g + \im \sign^\eta
    \end{array}\]
  The map $\xi$ is a surjective morphism of groups since $f$ is invertible in $C(X_F,\Z)$. Moreover, $g \in \ker \xi$ if and only
  if $f\cdot g \in \im \sign^\eta = f\cdot \im \sign^{\eta'}$, so $\ker \xi = \im
  \sign^{\eta'}$ and $\xi$ induces an isomorphism from $S^{\eta'}(A,\s)$ to
  $S^\eta(A,\s)$.
\end{proof}

\begin{defi}  
We denote by $S(A,\s)$ a representative of the isomorphism class of  $S^\eta(A,\s)$ and 
call it the \emph{stability group} of $(A,\s)$. We let $\st(A,\s):= \st^\eta(A,\s)$ and
call it  the \emph{stability index} of $(A,\s)$. 
\end{defi}

\begin{prop}
  Let $h_0 \in W(A,\s)$ and $k_0 \in \N\cup\{0\}$ be as in Lemma~\ref{h-naught}.
Then 
\[\st(A,\s) \le \st(F)   + k_0,\]
where $\st(F)$ denotes the stability index of $F$.
  \end{prop}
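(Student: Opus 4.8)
The plan is to reduce the statement for $(A,\s)$ to the known stability index of the base field $F$, using the reference form $h_0$ from Lemma~\ref{h-naught} as a bridge that carries powers of $2$ in $W(F)$ into the image of $\sign^H$. Recall that by definition $\st(F)$ is the smallest $k$ such that $2^k\cdot C(X_F,\Z)\subseteq\im(\sign)$, where $\sign\colon W(F)\to C(X_F,\Z)$ is the total signature of quadratic forms. So it suffices to show that for every $f\in\wt C(X_F,\Z)$, the function $2^{\st(F)+k_0}f$ lies in $\im(\sign^H)$.

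The key steps, in order, are as follows. First, fix $f\in\wt C(X_F,\Z)$, so $f$ is continuous on $X_F$ and vanishes on $\Nil[A,\s]$. By definition of $\st(F)$, the function $2^{\st(F)}f\in C(X_F,\Z)$ lies in $\im(\sign)$, i.e. there exists $q\in W(F)$ with $\sign q=2^{\st(F)}f$ on all of $X_F$. Second, apply Proposition~\ref{cokernel}(1), or more directly set $h:=q\cdot h_0\in W(A,\s)$; then on $\wt X_F$ we have
\[
\sign^H(q\cdot h_0)=\sign(q)\cdot\sign^H(h_0)=2^{\st(F)}f\cdot 2^{k_0}=2^{\st(F)+k_0}f,
\]
using the $W(F)$-module structure of the signature map together with $\sign^H h_0=2^{k_0}$ on $\wt X_F$. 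Third, check that this equality in fact holds on all of $X_F$: both $\sign^H(q\cdot h_0)$ and $2^{\st(F)+k_0}f$ vanish on $\Nil[A,\s]$ (the left side because $H$-signatures are zero on nil-orderings, the right side because $f\in\wt C(X_F,\Z)$), so the two continuous functions agree everywhere.

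The main obstacle, and the point requiring care rather than difficulty, is the compatibility of the total signature $\sign^H$ with multiplication by elements of $W(F)$, namely the identity $\sign^H(q\cdot h_0)=\sign(q)\cdot\sign^H(h_0)$ pointwise on $\wt X_F$; this is exactly the $W(F)$-linearity underlying the computation in Proposition~\ref{cokernel}, so no new input is needed. Having established $2^{\st(F)+k_0}f\in\im(\sign^H)$ for arbitrary $f\in\wt C(X_F,\Z)$, we conclude $2^{\st(F)+k_0}\cdot\wt C(X_F,\Z)\subseteq\im(\sign^H)$, and hence by definition of the stability index $\st(A,\s)\le\st(F)+k_0$, as claimed.
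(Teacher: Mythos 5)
Your proof is correct and is essentially the paper's own argument: pick $q\in W(F)$ with $\sign q=2^{\st(F)}f$ and observe that $\sign^H(q\cdot h_0)=2^{\st(F)+k_0}f$. The extra checks you include (the $W(F)$-linearity of $\sign^H$ and the vanishing of both sides on $\Nil[A,\s]$) are implicit in the paper's two-line proof; the only cosmetic point is that one should note the bound is vacuous when $\st(F)=\infty$, as the paper does by assuming $\st(F)$ finite.
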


\begin{proof} 
Assume that $\st(F)$ is finite.
  Let $f \in  \wt C(X_F, \Z)$. Then there exists $q \in W(F)$ such that $2^{\st(F)} f = \sign
  q$, and thus $\sign^\eta(q\cdot h_0) = 2^{\st(F) + k_0}f$.
\end{proof}

\begin{prop}
  Let $(A,\s)$ and $(B,\tau)$ be two Morita equivalent $F$-algebras with involution.  Then $S(A,\s) \cong S(B,\tau)$ and $\st(A,\s) =
  \st(B,\tau)$.
\end{prop}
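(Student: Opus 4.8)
The plan is to reduce the statement to a comparison of the two total signature maps $\sign^H\colon W(A,\s)\to\wt C(X_F,\Z)$ and $\sign^{H'}\colon W(B,\tau)\to\wt C(X_F,\Z)$ and then to conclude as in the proof of Proposition~\ref{stab-iso}. First I would record that Morita equivalence forces $A\sim B$ with $\s,\tau$ of the same kind. Treating first the case where $\s$ and $\tau$ are moreover of the same type (so that $\ve_0=1$ in the Witt group isomorphism of Section~\ref{sec:notation}), Proposition~\ref{nil-prop}(2) gives $\Nil[B,\tau]=\Nil[A,\s]$. Hence the target group $\wt C(X_F,\Z)$ is literally the same for both algebras, so that $S(A,\s)$ and $S(B,\tau)$ are cokernels of two maps with a common codomain, and it suffices to compare their images.

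The heart of the argument is that the $H$-signature is assembled from Morita-invariant data. Fix $P\in\wt X_F$ and recall the defining sequence \eqref{seq}: the isomorphism $\mu_P$ identifies $W(A\ox_F F_P,\s\ox\id)$ with $W(D_P,\vt_P)$, where $(D_P,\vt_P)$ is the real division algebra with involution attached to the common Morita class of $(A\ox_F F_P,\s\ox\id)$ and $(B\ox_F F_P,\tau\ox\id)$. Thus $(D_P,\vt_P)$, the integer $\ell=\dim_{F_P}D_P$, and the maps $\rho_P$ and $\sign$ are the same for $A$ and $B$. Let $\Phi\colon W(A,\s)\to W(B,\tau)$ be the isomorphism induced by the Morita equivalence. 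The key step is to verify that $\Phi$ is compatible with scalar extension to $F_P$ followed by $\mu_P$, i.e. that $\mu_P^{B}\circ r_P^{B}\circ\Phi=\mu_P^{A}\circ r_P^{A}$ for every $P\in\wt X_F$; this is a local statement over each real closure and follows from the functoriality of Morita theory and the compatibility of $\mu_P$ with scalar extension. Granting this, $\sign(\rho_P\circ\mu_P)(h\ox F_P)$ and $\sign(\rho_P\circ\mu_P)(\Phi(h)\ox F_P)$ coincide for all $h$ and all $P\in\wt X_F$.

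It then remains to feed this into the normalization. I would choose the reference form $H'=\Phi(H)$ for $(B,\tau)$, which is legitimate by \eqref{sign_form} since the displayed compatibility shows $\sign(\rho_P\circ\mu_P)(\Phi(H)\ox F_P)=\pm\sign(\rho_P\circ\mu_P)(H\ox F_P)\neq0$ on $\wt X_F$. With this choice the factors $1/\ell$ and the normalizing signs $\delta_P^{H}$, $\delta_P^{H'}$ line up, so that $\sign^{H'}_P(\Phi(h))=\sign^H_P(h)$ for every $P\in X_F$ (both sides vanishing on $\Nil[A,\s]$). Since $\Phi$ is surjective, $\im(\sign^{H'})=\im(\sign^{H'}\circ\Phi)=\im(\sign^{H})$ as subgroups of $\wt C(X_F,\Z)$, whence $S(A,\s)=\wt C(X_F,\Z)/\im(\sign^H)=\wt C(X_F,\Z)/\im(\sign^{H'})=S(B,\tau)$ and $\st(A,\s)=\st(B,\tau)$; the independence of the conclusion from the chosen reference forms is then handled exactly as in Proposition~\ref{stab-iso}, by correcting with a $\{-1,1\}$-valued continuous function, cf. Remark~\ref{going-up}(3).

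The main obstacle is the reduction to $\ve_0=1$. When $\s$ and $\tau$ are of the first kind but of opposite type, one has $W(A,\s)\simeq W_{-1}(B,\tau)$ rather than $W(B,\tau)$, Proposition~\ref{nil-prop}(2) no longer applies, and the real division algebras with involution $(D_P,\vt_P)$ attached to the two $+1$-theories differ; consequently $\Nil[A,\s]$ and $\Nil[B,\tau]$, together with the codomains $\wt C(X_F,\Z)$, can genuinely change. This is the delicate point of the proof: in that situation the clean identification above is unavailable and one must either argue that the two total signature images nonetheless determine isomorphic cokernels, or relate the $+1$- and $-1$-signature theories directly. A secondary point, already implicit above, is that $\mu_P$ is canonical only up to the choices inherent in Morita theory, so the sign ambiguities absorbed by $\delta_P^{H}$ in \cite[\S3.2]{A-U-Kneb} must be tracked throughout, which is precisely what makes the passage through reference forms necessary rather than cosmetic.
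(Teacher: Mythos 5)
Your argument is correct and is essentially the paper's: the entire proof given there is a one-line appeal to the Morita invariance of $H$-signatures \cite[Thm.~4.2]{A-U-prime}, which is precisely the identity $\sign^{\Phi(H)}\circ\Phi=\sign^{H}$ that you derive by hand before passing to cokernels as in Proposition~\ref{stab-iso}. The opposite-type case you flag is a real issue with the statement's conventions rather than with your proof (there $\Nil[A,\s]$ and $\Nil[B,\tau]$ genuinely differ, so the two groups $\wt C(X_F,\Z)$ do not even coincide), but the paper's citation implicitly presupposes the same-type situation, exactly as your main argument does.
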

\begin{proof}
  It suffices to prove the first part of the statement, but this follows
  immediately from \cite[Theorem~ 4.2]{A-U-prime}.
\end{proof}

The following theorem extends a classical result in quadratic form theory (cf. \cite[(1.6)]{Broecker-74}) to algebras with involution:

\begin{thm}\label{thm:main} 
Let $(A,\s)$ be an $F$-algebra with involution and let $W_\tor(A,\s)$ denote the torsion subgroup of $W(A,\s)$.
The sequence
\begin{equation*}
\xymatrix{
0 \ar[r] &   W_\tor(A, \sigma ) \ar[r] & W(A, \sigma) \ar[r]^--{\sign^\eta} & \wt C(X_F,\mathbb{Z}) \ar[r] & S(A, \sigma )\ar[r] & 0
}
\end{equation*}
is exact. The groups $W_\tor(A, \sigma )$ and $S(A, \sigma )$ are $2$-primary torsion groups.
\end{thm}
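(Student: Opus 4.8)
The plan is to establish exactness at each of the four non-trivial spots and then verify the torsion claims. The exactness at $W_\tor(A,\s)$ is immediate, since the map is just the inclusion. For exactness at $W(A,\s)$, I would need to show that $\ker(\sign^H) = W_\tor(A,\s)$. The inclusion $W_\tor(A,\s) \subseteq \ker(\sign^H)$ is clear because $\sign^H$ lands in $\Z$, which is torsion-free, so any torsion element is killed; more precisely, since $\sign^H_P$ is a group morphism to $\Z$ for each $P$, a torsion element has zero signature at every $P$. The reverse inclusion $\ker(\sign^H) \subseteq W_\tor(A,\s)$ is the substantive ``local-global'' statement: if all $H$-signatures of $h$ vanish, then $h$ is torsion. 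I expect this to follow from the results of \cite{A-U-prime}, where $H$-signatures were shown to correspond to the natural morphisms $W(A,\s) \to \Z$ (cf. Remark~\ref{going-up}(1)), together with a Pfister-style local-global principle for hermitian forms; this is the main obstacle and the place where I would lean most heavily on the cited prior work rather than reproving it here.

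Next, exactness at $\wt C(X_F,\Z)$ is essentially definitional: $S(A,\s)$ is defined as the cokernel of $\sign^H$, so the image of $\sign^H$ equals the kernel of the quotient map $\wt C(X_F,\Z) \to S(A,\s)$, and surjectivity of the quotient map gives exactness at $S(A,\s)$. I should first confirm that $\sign^H$ does take values in $\wt C(X_F,\Z)$ and not merely in $C(X_F,\Z)$: continuity is noted at the start of Section~4 (cf. \cite[Thm.~7.2]{A-U-Kneb}), and the vanishing on $\Nil[A,\s]$ holds because $\sign^H_P h = 0$ for $P \in \Nil[A,\s]$ by definition of the $H$-signature. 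With the codomain correctly identified as $\wt C(X_F,\Z)$, the cokernel interpretation makes the last two exactness statements formal.

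Finally I would prove the torsion assertions. That $S(A,\s)$ is $2$-primary torsion is exactly the content of the corollary following Proposition~\ref{cokernel}, so I would simply cite it. For $W_\tor(A,\s)$, I need that it is $2$-primary; I expect this to reduce, via the isomorphism $\mu_P$ and the residue/Morita machinery, to the analogous well-known fact for the Witt ring $W(F)$ of a formally real field (where the torsion is $2$-primary by Pfister's theorem), transported through the scalar extension and Morita equivalences; alternatively it follows directly from the local-global principle used for exactness at $W(A,\s)$, since an element of $\ker(\sign^H) = W_\tor(A,\s)$ is annihilated by a power of $2$ by the same Pfister-type bounding argument.

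In summary, I would organize the proof as: (i) inclusion at the left is trivial; (ii) $\ker(\sign^H) = W_\tor(A,\s)$ via the local-global principle of \cite{A-U-prime}, the hard step; (iii) exactness at the middle-right and right are the cokernel definition of $S(A,\s)$ together with surjectivity onto the quotient; and (iv) the $2$-primary torsion claims follow from the corollary to Proposition~\ref{cokernel} for $S(A,\s)$ and from Pfister's theorem transported through $\mu_P$ for $W_\tor(A,\s)$.
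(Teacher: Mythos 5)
Your proposal is correct and takes essentially the same route as the paper, which simply assembles the theorem from citations: the local-global principle (\cite[Thm.~4.1]{LU1}, which also yields that elements of zero total signature are annihilated by a power of $2$), Scharlau's induction theorem \cite[Thm.~5.1]{Sch1}, and the definition of $S(A,\s)$ as the cokernel of $\sign^H$. Of your two suggested arguments for the $2$-primariness of $W_\tor(A,\s)$, the second (via the strong, weakly-hyperbolic form of the local-global principle) is the one matching the cited results, while the first, through $\mu_P$, would not work directly since $\mu_P$ only exists over real closures and does not control torsion over $F$.
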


\begin{proof} This follows from \cite[Theorem~4.1]{LU1}, \cite[Theorem~5.1]{Sch1} 
 and Corollary~\ref{coker}. 
\end{proof}

\begin{examples} In each of the examples below, $A$ is a quaternion division algebra.
\begin{enumerate}[(1)]
\item Let $F=\R$, $A=(-1,-1)_F$ and $\s$ quaternion conjugation. Then $\wt X_F=X_F$ and
 $\im\sign^\eta\cong \Z\cong \wt C(X_F,\Z)$. Hence $S(A,\s)\cong \{0\}$ and $\st(A,\s)=0$. Note that $\st(\R)=0$.

\item Let $F=\R$, $A=(-1,-1)_F$ and $\s$ orthogonal. Then $\wt  X_F = \varnothing$ and
 $\im\sign^\eta\cong \{0\} \cong \wt C(X_F,\Z)$. Hence $S(A,\s)\cong \{0\}$ and $\st(A,\s)=0$. 

\item Let $F=\Q(\sqrt{2})$, $A=(-1, -\sqrt{2})_F$ and $\s$ quaternion conjugation. Then $\wt X_F$ consist of the unique ordering that contains $\sqrt{2}$ and 
 $\im\sign^\eta\cong \Z\x \{0\} \cong \wt C(X_F,\Z)$. Hence $S(A,\s)\cong \{0\}$ and $\st(A,\s)=0$. Note that $\st(\Q(\sqrt{2}))=1$.

\item Let $F=\R(\!(x)\!)$, $A=(x, -1)_F$ and $\s$ orthogonal.  Then $\wt X_F$ consist of the unique ordering that contains $x$ and  $\im\sign^\eta\cong 2\Z\x \{0\}$ and $\wt C(X_F,\Z)\cong \Z \x \{0\}$. Hence $S(A,\s)\cong \Z/2\Z$ and $\st(A,\s)=1$. Note that $\st(\R(\!(x)\!))=1$.
\end{enumerate}
\end{examples}

We now consider the total signature  $\sign^\eta h$ of a hermitian form $h\in W(A,\s)$. Since this is a continuous function, there exists a nonnegative integer $k$ such that $2^k\sign^\eta (h)$ is the total signature of some quadratic form over $F$. In the next two results we will show that $k$ can be chosen independently of $h$.

\begin{lemma}\label{rel-part}
  There exist disjoint clopen subsets $U_1, \ldots, U_t$ of $\widetilde X_F$ and
  positive integers $n_1, \ldots, n_t $ such that $\widetilde X_F = U_1 \cup
  \cdots \cup U_t$ and for every $h \in W(A,\s)$ and $i \in \{1,\ldots, t\}$,
  there exists $q_i \in W(F)$ such that $\sign(q_i) = 2^{n_i}\sign^\eta(h)$ on $U_i$.
\end{lemma}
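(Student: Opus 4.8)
The plan is to use the local-to-global results from the real-splitting section to reduce, around each point $P\in\wt X_F$, to a situation where $(A\ox_F L,\s\ox\id)$ becomes Morita equivalent to an $(L,Q)$-real division algebra over a finite extension $(L,Q)$ of $(F,P)$, and then exploit the Knebusch trace formula to transfer signatures of hermitian forms to signatures of quadratic forms over $F$. The exponents $2^{n_i}$ will arise, as in Lemma~\ref{power-of-2-local}, from the degree $[L:F]$ (a power of $2$) together with the number of orderings of $L$ lying above $P$, via the factor introduced by restricting along the trace.

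First I would fix $P\in\wt X_F$. By Theorem~\ref{first-kind} and Corollary~\ref{second-kind-arb-cor} there is a finite ordered extension $(L,Q)$ of $(F,P)$, with $L\subseteq A$, so that $(A\ox_F L,\s\ox\id)$ is Morita equivalent to an $(L,Q)$-real division algebra with involution. The key point is that the fibre $X_L/P$ is finite, so after a further clopen separation (as in the proof of Lemma~\ref{power-of-2-local}) we can isolate the behaviour over $P$. For any $h\in W(A,\s)$, by Lemma~\ref{sign-surj} and \cite[Thm.~4.2]{A-U-prime} the signatures of $h\ox L$ at the orderings in $X_L/P$ are computable, and the Knebusch trace formula \cite[Thm.~8.1]{A-U-Kneb} gives
\[
\sign^H_P h=\sum_{Q'\in X_L/P}\sign^{H\ox L}_{Q'}(h\ox L),
\]
which after scaling by a suitable power of $2$ (to absorb the denominators $1/\ell$ appearing in the definition of $H$-signature over the real division algebra) lands in the image of the signature map from $W(F)$. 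Concretely, I would build, for each $P$, an integer $n_P$ and a clopen neighbourhood $V_P$ of $P$ in $\wt X_F$ on which $2^{n_P}\sign^H(h)$ agrees with $\sign(q)$ for some $q\in W(F)$ depending on $h$; the uniformity of $n_P$ in $h$ comes from the fact that the denominator $\ell$ and the extension $[L:F]$ depend only on $P$, not on $h$.

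Next I would globalise. Since $\wt X_F$ is compact and each $P$ has such a clopen neighbourhood $V_P$, finitely many $V_{P_1},\dots,V_{P_m}$ cover $\wt X_F$; disjointifying them (exactly as in the proof of Lemma~\ref{h-naught}, by removing intersections) yields disjoint clopen sets $U_1,\dots,U_t$ with $\wt X_F=U_1\cup\cdots\cup U_t$ and an integer $n_i$ attached to each $U_i$. The crucial feature is that $n_i$ is extracted purely from the splitting data over $U_i$ (the degree $[L:F]$ and the size of the fibre), so it is independent of $h$; only the quadratic form $q_i\in W(F)$ realising $2^{n_i}\sign^H(h)$ on $U_i$ depends on $h$. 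One must check that restricting a quadratic form to a clopen set $U_i$ is achievable within $W(F)$: this is standard, using forms with prescribed support à la \cite[VIII, Lemma~6.10]{Lam} to cut $q_i$ down to $U_i$, which may cost a further controlled power of $2$ that can be folded into $n_i$.

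\textbf{The main obstacle} I expect is arranging the exponents $n_i$ to be genuinely uniform in $h$ while simultaneously realising the restricted signature as the signature of a single quadratic form over $F$ on each piece. The trace-formula computation produces $2^r\sign^{H\ox L}_{Q_0}h$ for a \emph{fixed} test form in Lemma~\ref{power-of-2-local}, but here $h$ is arbitrary, so I must track how the normalising denominator $1/\ell$ and the sign $\delta_P^{H}$ interact with an arbitrary $h$; the delicate part is that Remark~\ref{going-up}(2) lets me identify $\sign^{H\ox L}_{Q'}(h\ox L)$ with $\sign^H_{P}h$ only over orderings $Q'$ restricting to the \emph{same} $P$, whereas within a single clopen $U_i$ different base orderings $P$ may have fibres of different sizes. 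Controlling this forces $n_i$ to be taken as a suitable maximum over $U_i$, and verifying that this maximum is finite and that the resulting $q_i$ can still be chosen in $W(F)$ is where the real care is needed.
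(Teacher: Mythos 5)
Your overall skeleton (local real splitting via Theorem~\ref{first-kind} and Corollary~\ref{second-kind-arb-cor}, then compactness and disjointification of clopen neighbourhoods) matches the paper's, but there is a genuine gap at the central step: you never construct the quadratic form $q_i\in W(F)$. Saying that the fibre sum ``after scaling by a suitable power of $2$ \dots lands in the image of the signature map from $W(F)$'' is precisely the assertion to be proved, and the only generic tool for it (\cite[VIII, Lemma~6.9]{Lam}) produces an exponent depending on the function, hence on $h$ --- which is exactly the uniformity problem you flag as the ``main obstacle'' and then leave unresolved. The paper supplies the missing mechanism: for fixed $Q$ it builds an explicit homomorphism $W(A,\s)\to W(F)$, namely
\[h \longmapsto \trs_{L/F}\Bigl(\la\!\la \bar a_Q\ra\!\ra\cdot\tfrac{4}{\ell}\,\rho(\mu(h\ox L))\Bigr),\]
where $\mu$ is the Morita isomorphism onto the $(L,R)$-real division algebra $(D,\vt)$, $\rho$ is the Jacobson map of Lemma~\ref{Jacobson} (which turns hermitian forms over $(D,\vt)$ into quadratic forms over $L$ at the cost of the fixed factor $\ell\in\{1,2,4\}$), the Pfister form $\la\!\la\bar a_Q\ra\!\ra$ isolates a single ordering $R_Q$ in the fibre $X_L/Q$, and $\trs_{L/F}$ is the Scharlau transfer. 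This gives $\sign_Q\bigl(\trs_{L/F}\vf_Q\bigr)=2^{\ell_Q+2}\delta_{R_Q}\sign_Q^H h$ with an exponent depending only on $Q$ and not on $h$, after which a second compactness argument finishes. Without $\rho$ and the Scharlau transfer you have no map from hermitian forms to quadratic forms over $F$, so no uniform exponent can be extracted.

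A secondary error: the identity you attribute to the Knebusch trace formula, $\sign^H_P h=\sum_{Q'\in X_L/P}\sign^{H\ox L}_{Q'}(h\ox L)$, is false; by Remark~\ref{going-up}(2) each summand already equals $\sign^H_P h$, so the right-hand side is $|X_L/P|\cdot\sign^H_P h$. The formula \cite[Thm.~8.1]{A-U-Kneb} computes the signature of a \emph{transfer} $\trs_{A\ox_F L}(h')$ of a form $h'$ over $A\ox_F L$, not of a scalar extension, and in the paper it is used only in Lemma~\ref{power-of-2-local}; here the relevant transfer is the quadratic-form one \cite[Chap.~3, Thm.~4.5]{Sch}. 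Also note that $[L:F]$ need not be a power of $2$ (the unitary case contributes odd-degree parts), though this does not affect the structure of the argument.
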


\begin{proof}
  By Remark \ref{going-up}, for every $P \in \widetilde X_F$ there exists a
   finite ordered field extension  $(L_P, R_P)$ of $(F,P)$ such that $(A
   \otimes_F L_P, \s\ox\id)$ is Morita equivalent to an $(L_P,R_P)$-real
   division algebra with involution $(D_P, \vt_P)$. 
  
  Let $S_P = \{Q \in \widetilde X_F \mid Q \textrm{ extends to } L_P\}$. By
  \cite[Chapter~3, Theorem~4.4]{Sch} we have $S_P= \{Q \in \widetilde X_F \mid
  \sign_Q (\trs{L_P/F}\la 1 \ra) > 0\}$,  
  where $\trs{L_P/F} : W(L_P) \to W(F)$ denotes the Scharlau transfer.
  The set $S_P$ is therefore a clopen
  subset of $\widetilde X_F$ containing $P$, and by compactness there are $P_1,
  \ldots, P_t \in \widetilde X_F$ such that $\widetilde X_F = S_{P_1} \cup
  \cdots \cup S_{P_t}$. It follows that there are disjoint clopen sets $S_i
  \subseteq S_{P_i}$ (for $i=1, \ldots, t$) such that $\widetilde X_F = S_1 \dotcup
  \cdots \dotcup S_t$.

Let $i \in \{1,\ldots, t\}$ and write $S:=S_i$, $L:=L_{P_i}$. It suffices to prove the lemma for $S$ instead of $\wt X_F$.
  Let $Q\in S$ and let $R \in X_{L}/Q$. We have
  morphisms of Witt groups \[W(A\ox_F L, \s\ox\id) \stackrel{\mu}{\too}
  W(D,\vt) \stackrel{\rho}{\too} W(L),\] where $(D,\vt)$ is $(L,
  R)$-real  (note that  $R\not\in\Nil[D,\vt]$ since $Q\not\in\Nil[A,\s]$ by
  Proposition~\ref{nil-prop}), $\mu$ is an isomorphism induced by Morita
  equivalence and $\rho$ is the map of Lemma~\ref{Jacobson}.  Let $\eta_0=\la
  1\ra_{\vt} \in W(D, \vt)$ and let $\delta_R \in\{-1,1\}$ be the sign of $\sign_R^{\mu(\eta\ox
  L)} \eta_0$.  Let $h\in W(A,\s)$ be arbitrary. Then, using
  Remark~\ref{going-up}, we have
  \begin{align*}
    4\sign_Q^\eta h &= 4\sign_R^{\eta\ox L} (h\ox L) \\
    &= 4\sign_R^{\mu(\eta\ox L)} \mu(h\ox L) & \textrm{[by  \cite[Theorem~4.2]{A-U-prime}]}\\
    &= 4\sgn(\sign_R^{\mu(\eta\ox L)} \eta_0) \sign_R^{\eta_0} \mu(h\ox L) & \textrm{[by Remark~\ref{going-up}(3)]}   \\
    &= 4\delta_R \sign_R^{\eta_0} \mu(h\ox L)\\
    &= \delta_R\frac{4}{\ell} \sign_R \rho(\mu(h\ox L))    & \textrm{[by Lemma~\ref{Jacobson}]}\\
    &= \delta_R \sign_R\vf,    
  \end{align*}
  where $\ell=\dim_{L}D \in \{1,2,4\}$
  and $\vf:= \frac{4}{\ell}\rho(\mu(h\ox L)) \in W(L)$.
   
  Observe that for every $R \in X_{L}/Q$, $\sign_R^{\mu(\eta\ox L)} \eta_0 \not
  = 0$ since $\eta_0$ is a reference form for $(D,\vt)$.
  Since $X_{L}/Q$ is finite, there is a finite tuple $\bar a_Q$ of elements of  $L$ of length $\ell_Q$
  such
  that $H(\bar a_Q) \cap (X_{L}/Q)$  contains only one ordering $R_{Q}$
  (where $H(\bar a_Q)$ denotes the Harrison set defined by $\bar a_Q$). In
  particular
  \[\sum_{R \in X_{L}/Q} \sign_R^{\mu(\eta\ox L)} (\la\!\la \bar a_Q\ra\!\ra
    \cdot \eta_0) = 2^{\ell_Q} \sign_{R_{Q}}^{\mu(\eta\ox L)} \eta_0 \not =
  0.\]
  Define $\vf_{Q} := \la\!\la \bar a_Q \ra\!\ra \cdot \vf$. Then, by the (quadratic) Knebusch trace formula
   \cite[Chapter~3, Theorem~4.5]{Sch},
  \[\sign_Q (\trs{L/F} \vf_{Q}) = \sum_{R \in X_{L}/Q} \sign_R \vf_{Q} =
  2^{\ell_Q} \sign_{R_{Q}} \vf = 2^{\ell_Q + 2}
  \delta_{R_{Q}} \sign^\eta_Q h.\]
  Therefore the clopen subset of $X_F$
  \[U_Q := (\sign (\trs{L/F} \vf_{Q}) - 2^{\ell_Q + 2}
  \delta_{R_{Q}} \sign^\eta h)^{-1}(0)\]
  contains $Q$, and thus $S = \bigcup_{Q \in X_F} U_Q$. Since $S$ is compact
  we obtain $S = U_{Q_1} \cup \cdots \cup U_{Q_r}$ for some $Q_1, \ldots, Q_r
  \in S$, and for every $Q \in U_{Q_j}$:
  \[2^{\ell_{Q_j} + 2} \delta_{R_{Q_j}} \sign^\eta_Q h = \sign_Q
  (\trs{L/F} \vf_{Q_j}),\]
  and thus
  \[2^{\ell_{Q_j} + 2} \sign^\eta_Q h = \sign_Q
  (\delta_{R_{Q_j}} \trs{L/F}  \vf_{Q_j}).\]
  Since  $\delta_{R_{Q_j}} \trs{L/F} \vf_{Q_j} \in W(F)$, the result
  follows by taking clopen sets $U_j \subseteq U_{Q_j}$ such that $S = U_1
  \dotcup \cdots \dotcup U_r$.
  \end{proof}

\begin{thm}\label{rel-st}
  There exists $n_0 \in \N\cup\{0\}$ such that for every $h \in W(A,\s)$ there exists $q_h \in
  W(F)$ with $2^{n_0} \sign^\eta h = \sign q_h$.
  \end{thm}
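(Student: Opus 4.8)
The plan is to reduce the global statement of Theorem~\ref{rel-st} to the local information already assembled in Lemma~\ref{rel-part}, and then patch the locally constructed quadratic forms together into a single form over $F$ using characteristic functions of clopen sets. The key point is that Lemma~\ref{rel-part} already produces a \emph{finite partition} $\widetilde X_F = U_1 \dotcup \cdots \dotcup U_t$ into disjoint clopen pieces together with fixed exponents $n_1,\ldots,n_t$, such that on each $U_i$ and for each $h \in W(A,\s)$ there is some $q_i = q_i(h) \in W(F)$ with $\sign(q_i) = 2^{n_i}\sign^H(h)$ on $U_i$. The crucial feature is that the integers $n_i$ and the clopen decomposition depend only on $(A,\s)$ and not on $h$; only the forms $q_i$ depend on $h$. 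This is exactly what is needed to extract a uniform exponent.

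First I would set $n_0 := \max\{n_1, \ldots, n_t\}$, so that $n_0$ depends only on the decomposition and hence only on $(A,\s)$. Next, fix an arbitrary $h \in W(A,\s)$ and obtain the forms $q_1, \ldots, q_t \in W(F)$ from Lemma~\ref{rel-part}. The remaining task is to assemble a single $q_h \in W(F)$ whose total signature equals $2^{n_0}\sign^H(h)$ everywhere on $X_F$ (noting that $\sign^H h$ vanishes on $\Nil[A,\s]$, so only the behaviour on $\widetilde X_F$ matters). For each $i$, by \cite[VIII, Lemma~6.10]{Lam} there is a form $p_i \in W(F)$ whose signature equals $1$ on $U_i$ and $0$ on $X_F \setminus U_i$; multiplying appropriately, I would form $q_h := \sum_{i=1}^t 2^{n_0 - n_i}\, p_i \cdot q_i$. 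On each $U_j$ the characteristic behaviour of $\sign(p_i)$ isolates the $i=j$ term, giving $\sign_Q(q_h) = 2^{n_0 - n_j}\sign_Q(q_j) = 2^{n_0 - n_j} \cdot 2^{n_j}\sign^H_Q(h) = 2^{n_0}\sign^H_Q(h)$ for $Q \in U_j$. Since the $U_j$ cover $\widetilde X_F$ and both sides vanish on $\Nil[A,\s]$, this gives $2^{n_0}\sign^H h = \sign q_h$ on all of $X_F$, as required.

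The construction is essentially bookkeeping once Lemma~\ref{rel-part} is in hand, so the genuine mathematical work has already been done in that lemma; the only subtlety in this final step is to be sure that $n_0$ is chosen \emph{before} $h$ is fixed and that the clopen characteristic forms $p_i$ exist (which is the quoted result of Lam) so that the patching respects the partition. I would therefore expect no serious obstacle here: the main point to emphasize is simply that Lemma~\ref{rel-part} supplies a single finite clopen decomposition and a single tuple of exponents valid uniformly in $h$, which is precisely the content that upgrades the pointwise statement into the uniform exponent $n_0$.
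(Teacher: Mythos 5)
Your overall strategy is the same as the paper's: take the finite clopen partition and uniform exponents from Lemma~\ref{rel-part}, multiply by ``characteristic'' forms supported on the $U_i$, and sum. However, there is one genuine error in the patching step. You invoke \cite[VIII, Lemma~6.10]{Lam} to produce $p_i \in W(F)$ with $\sign(p_i)=1$ on $U_i$ and $0$ elsewhere. That lemma does not give this: it only produces a form whose signature is $2^{m_i}$ on $U_i$ (for \emph{some} nonnegative integer $m_i$ depending on $U_i$) and $0$ off $U_i$. The exact characteristic function of a clopen set is in general \emph{not} the total signature of any quadratic form; for instance over $F=\R(\!(x)\!)$ the space $X_F$ has two orderings and $\im(\sign)=\{(a,b)\in\Z^2 \mid a\equiv b \bmod 2\}$, so $(1,0)$ is not realized. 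With your $p_i$ as stated, the form $q_h=\sum_i 2^{n_0-n_i}p_i\cdot q_i$ does not exist. (Note also that even granting the existence of forms with signature $2^{m_i}$ on $U_i$, your formula would yield $2^{n_0+m_j}\sign^H h$ on $U_j$ with an exponent varying with $j$, so a further normalization is needed.)

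The repair is exactly what the paper does and is in the spirit of your argument: choose a single $m$ (e.g.\ $\max_i m_i$, after multiplying each $p_i$ by a power of $2$) so that $\sign p_i = 2^m$ on $U_i$ and $0$ elsewhere, replace each $q_i$ by $2^{r-n_i}\times q_i$ with $r:=\max_i n_i$ (the paper uses $r=n_1+\cdots+n_t$, which also works), and set $q_h := p_1q_1+\cdots+p_tq_t$ and $n_0 := m+r$. Since the $U_i$ are disjoint and cover $\wt X_F$, and both sides vanish on $\Nil[A,\s]$, this gives $\sign q_h = 2^{n_0}\sign^H h$ on all of $X_F$. Your key observation---that the partition and the exponents $n_i$ from Lemma~\ref{rel-part} are uniform in $h$, so $n_0$ can be fixed before $h$ is chosen---is correct and is indeed the point of the theorem; only the existence claim for the $p_i$ needs to be weakened and the exponents rebalanced accordingly.
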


\begin{proof}
  We use the terminology of Lemma~\ref{rel-part}. Let $r := n_1
  + \cdots + n_t$. Then for every $h \in W(A,\s)$ and every $i \in \{1, \ldots,
  t\}$ there exists $q_i \in W(F)$ such that $\sign q_i = 2^r \sign^\eta h$ on $U_i$.

  Also, there exist $m \in \N\cup\{0\}$ and,  for every $i\in \{1,\ldots,t\}$,  some $p_i \in W(F)$ such that $\sign
  p_i = 2^m$ on $U_i$ and $\sign p_i = 0$ on $X_F \setminus U_i$. Therefore 
  $ \sign (p_i q_i)= 2^m \sign q_i=2^{m+r} \sign^\eta h$ on $U_i$ and $\sign (p_i q_i) =
  0$ on $X_F \setminus U_i$, and we obtain for $i=1, \ldots, t$,
  \begin{align*}
    \sign (p_1 q_1 + \cdots + p_tq_t) &= \sign p_iq_i \text{ on } U_i \\
         &= 2^{m + r} \sign^\eta h \text{ on } U_i
  \end{align*}
  and thus
 \[     \sign (p_1 q_1 + \cdots + p_tq_t) = 2^{m + r} \sign^\eta h \text{ on } X_F\]
  (note that by construction the quadratic form $p_1 q_1 + \cdots + p_tq_t$ has
  zero signature on $\Nil[A,\s]$).
\end{proof}

\begin{defi} We define
  \[W_\rd(A,\s) := W(A,\s)/W_\tor(A,\s) \cong \sign^\eta(W(A,\s))\subseteq \wt C(X_F,\Z)\]
  and call it the \emph{reduced Witt group} of $(A,\s)$.
\end{defi}

In order to compare reduced hermitian forms and reduced quadratic forms, we also introduce
\[ \wt W_\rd(F):= \{q \in W_\rd(F) \mid \sign q=0 \text{ on } \Nil[A,\s]\}.\]
Observe that $W_\rd(A,\s)$ is a $W_\rd(F)$-module and also a $\wt W_\rd(F)$-module in the natural way.

\begin{prop}
  Let $h_0 \in W(A,\s)$ and $k_0 \in \N\cup\{0\}$ be such that $\sign^\eta(h_0) = 2^{k_0}$ on $\wt X_F$ (cf. Lemma~\ref{h-naught}).   With notation as in Theorem~\ref{rel-st},
  the maps
  \[\wt W_\rd(F) \longrightarrow W_\rd(A,\s), \ q \longmapsto qh_0\]
  and
  \[W_\rd(A,\s) \longrightarrow \wt W_\rd(F), \ h \longmapsto q_h\]
are well-defined injective
  morphisms of $W_\rd(F)$-modules.
\end{prop}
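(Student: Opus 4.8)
The plan is to carry out the entire argument at the level of total signatures. Recall that $\sign^H$ has kernel exactly $W_\tor(A,\s)$, so that $W_\rd(A,\s)$ is identified with $\sign^H(W(A,\s))\subseteq\wt C(X_F,\Z)$; likewise $\sign$ identifies $W_\rd(F)$ with $\sign(W(F))\subseteq C(X_F,\Z)$, again with kernel the torsion forms. Throughout I will use the multiplicativity of the total $H$-signature, namely $\sign^H(q\cdot h)=\sign(q)\cdot\sign^H(h)$ as functions on $X_F$, for all $q\in W(F)$ and $h\in W(A,\s)$ (a standard property of $H$-signatures, also used in the proofs of Lemmas~\ref{power-of-2-local} and \ref{rel-part}). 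Finally, since $\sign(q)=0$ on $\Nil[A,\s]$ forces $\sign(pq)=0$ there for every $p$, the set $\wt W_\rd(F)$ is a $W_\rd(F)$-submodule of $W_\rd(F)$, while $W_\rd(A,\s)$ is a $W_\rd(F)$-module by reduction of its $W(F)$-module structure (torsion times anything is torsion); the multiplicativity above shows these reductions correspond, under $\sign$ and $\sign^H$, to pointwise multiplication of functions.

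For the first map $q\mapsto qh_0$, I would first check well-definedness: choosing a lift of $q\in\wt W_\rd(F)$ to $W(F)$ and altering it by a torsion form multiplies $h_0$ by a torsion form (a power of $2$ kills the torsion), so the class $qh_0\in W_\rd(A,\s)$ is independent of the lift and of the representative. Module linearity is immediate from associativity of the $W(F)$-action after reduction. For injectivity, suppose $qh_0$ is torsion; evaluating signatures gives $\sign(q)\cdot 2^{k_0}=\sign^H(qh_0)=0$ on $\wt X_F$, hence $\sign(q)=0$ on $\wt X_F$. As $q\in\wt W_\rd(F)$ also gives $\sign(q)=0$ on $\Nil[A,\s]$, we get $\sign(q)=0$ on all of $X_F=\wt X_F\dotcup\Nil[A,\s]$, so $q=0$ in $W_\rd(F)$. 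Here the nonvanishing $\sign^H(h_0)=2^{k_0}$ on $\wt X_F$ from Lemma~\ref{h-naught} is exactly what powers the injectivity.

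For the second map $h\mapsto q_h$, I would use the form $q_h\in W(F)$ supplied by Theorem~\ref{rel-st}, which satisfies $\sign(q_h)=2^{n_0}\sign^H(h)$. Since $\sign^H(h)=0$ on $\Nil[A,\s]$, the same holds for $\sign(q_h)$, so $q_h\in\wt W_\rd(F)$; moreover the reduced class of $q_h$ is determined by its signature and hence depends only on $\sign^H(h)$, so the assignment descends to a well-defined map on $W_\rd(A,\s)$. Additivity and $W_\rd(F)$-linearity follow by comparing signatures, e.g. $\sign(q_{p\cdot h})=2^{n_0}\sign^H(p\cdot h)=\sign(p)\cdot 2^{n_0}\sign^H(h)=\sign(p\,q_h)$, so $q_{p\cdot h}=p\,q_h$ in $W_\rd(F)$. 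Injectivity is again a signature computation: $q_h=0$ in $W_\rd(F)$ means $\sign(q_h)=0$ on $X_F$, whence $2^{n_0}\sign^H(h)=0$ and $\sign^H(h)=0$, i.e. $h=0$ in $W_\rd(A,\s)$.

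The proof is essentially bookkeeping once two structural facts are in hand: that total signatures realize both reduced Witt groups inside rings of continuous functions with kernel equal to the torsion, and the multiplicativity of $\sign^H$ over the $W(F)$-action. The one genuinely substantive ingredient is Theorem~\ref{rel-st}: it is precisely the uniformity of the exponent $n_0$ across all $h$ that makes $h\mapsto q_h$ additive and lands it in $W_\rd(F)$ rather than in some larger group. So I expect no real obstacle in the proposition itself—the difficulty has already been absorbed into Theorem~\ref{rel-st} (and Lemma~\ref{rel-part}).
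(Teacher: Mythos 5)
Your proposal is correct and follows essentially the same route as the paper: both identify $W_\rd(F)$ and $W_\rd(A,\s)$ with the images of $\sign$ and $\sign^H$ inside $C(X_F,\Z)$, observe that the first map is then just multiplication by $2^{k_0}$ (injective since the functions vanish on $\Nil[A,\s]$ and $\Z$ is torsion-free), and handle the second map by noting that $q_h$ is determined in $\wt W_\rd(F)$ by $\sign q_h = 2^{n_0}\sign^H h$. Your write-up merely makes explicit the bookkeeping (multiplicativity of $\sign^H$ over the $W(F)$-action, the splitting $X_F=\wt X_F\dotcup\Nil[A,\s]$) that the paper leaves as "easy to check."
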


\begin{proof}
  Identifying $W_\rd(F)$ and $W_\rd(A,\s)$ with the images of
  $\sign$ and $\sign^\eta$, we see that the first map is simply  multiplication
  by $2^{k_0}$.

  The second map is
  well-defined because $h_1 = h_2$ in $W_\rd(A,\s)$ is equivalent to $\sign^\eta h_1 =
  \sign^\eta h_2$, which implies $\sign q_{h_1} = \sign q_{h_2}$, so $q_{h_1} =
  q_{h_2}$ in $\wt W_\rd(F)$. It is also easy to check that it is an
  injective morphism of $W_\rd(F)$-modules.
\end{proof}

We finish this paper by pointing out some difficulties that need to be overcome in order to further the study of the stability index of algebras with involution.

In the quadratic form literature one can find important links between the stability index of the field $F$ and the powers of the fundamental ideal $I^n(F)$, which crucially depend on Pfister forms (see for example \cite[Satz~3.17]{Broecker-74}).
Although it is possible to define $I(A,\s)$ as the submodule of forms of even rank in $W(A,\s)$ (cf. \cite[\S 2.2]{BP1} or 
\cite[\S 3.2.1]{GB}), the lack of a tensor product of hermitian forms in general   is a  
serious obstacle to the development of  analogous concepts and connections for Witt groups of algebras with involution. 

Another issue is the following: the quadratic Pfister form $\la 1, a\ra$ has signature $2$ on $H(a)$ and signature $0$ on $H(-a)$, which is a fundamental observation when considering   $\st(F)$. In contrast, this behaviour cannot in general be replicated with hermitian forms since 
the signature of the form $\la 1\ra_\s$ may not be constant  and in addition may take values which are not in $\{-1,1\}$.

\def\cprime{$'$}
\providecommand{\bysame}{\leavevmode\hbox to3em{\hrulefill}\thinspace}
\providecommand{\MR}{\relax\ifhmode\unskip\space\fi MR }
\providecommand{\MRhref}[2]{%
  \href{http://www.ams.org/mathscinet-getitem?mr=#1}{#2}
}
\providecommand{\href}[2]{#2}

\end{document}